\def\serieslogo@{} \def\@setcopyright{} \makeatother
\renewcommand*\env@matrix[1][c]{\hskip -\arraycolsep
  \let\@ifnextchar\new@ifnextchar
  \array{*\c@MaxMatrixCols #1}}
\newtheorem{thm}{Theorem}[section]
\newtheorem{cor}[equation]{Corollary}
\newtheorem{lem}[equation]{Lemma}
\newtheorem{prop}[equation]{Proposition}
\theoremstyle{definition}
\newtheorem{defn}[equation]{Definition}
\newtheorem{rem}[equation]{Remark}
\newtheorem{exam}[equation]{Example}
\newcommand{\C}{\mathscr C}
\newcommand{\K}{\mathcal K}
\newcommand{\T}{\mathcal T}
\newcommand{\W}{\mathcal W}
\newcommand{\ie}{i.e.\ }
\def\a{\alpha}
\def\e{\varepsilon}
\newcommand{\po}{\ar@{}[dr]|{\text{\pigpenfont R}}}
\newcommand{\pb}{\ar@{}[dr]|{\text{\pigpenfont J}}}
\newcommand{\PDer}{\mathbf{PDer}}
\newsavebox{\proofbox}
\savebox{\proofbox}{\begin{picture}(7,7)%
  \put(0,0){\framebox(7,7){}}\end{picture}}
\newcommand{\Mod}{\text{-}\mathbf{Mod}}
\DeclareMathOperator{\dia}{dia}
\newcommand{\xLeftarrow}[2][]{\ext@arrow 0359\Leftarrowfill@{#1}{#2}}
\newcommand{\xRightarrow}[2][]{\ext@arrow 0359\Rightarrowfill@{#1}{#2}}
\numberwithin{equation}{section}
\begin{document}

\title[Levelwise modules over separable monads on stable derivators]{Levelwise modules over separable monads on stable derivators}
\author[I. Lagkas-Nikolos]{Ioannis Lagkas-Nikolos}

\begin{abstract}
We show that given a separable cocontinuous monad on a stable derivator, the levelwise Eilenberg-Moore categories of modules glue together to a stable derivator. As an application, we give examples of derivators that satisfy all the axioms for stability except the strongness one.
\end{abstract}

\maketitle

\setcounter{tocdepth}{1} \tableofcontents

\section{Introduction}
Let $\C$ be a tensor triangulated category. Given a monoid object $A$ in $\C$, we can consider the category of modules over it. This category, however, is usually not useful from a homotopic perspective. In particular, it might fail to inherit a natural triangulated structure from that of $\C$ (see Example~\ref{exam: modules not compatible with triangulation}). A notable exception is when the monoid $A$ in question is separable (see~\cite{Balmer}). Such monoids appear a lot in practice: commutative \'etale algebras in commutative algebra~\cite[Corollary 6.6]{Balmer}, \'etale extensions in algebraic geometry~\cite[Theorem 3.5 and Remark 3.8]{Balmer-Neeman-Thomasson}, $k(G/H)$ for subgroups $H<G$ of finite index in representation theory~\cite[Theorem 1.2]{Balmer-stacks}, $\Sigma^{\infty} (G/H)_+$ for $H$ a closed subgroup of finite index of a compact Lie group in equivariant stable homotopy theory~\cite[Theorem 1.1]{Balmer-equivariant}, and in other equivariant settings~\cite[Theorems 1.2, 1.3]{Balmer-equivariant}. More generally, in~\cite{Balmer} the author proves this result for modules over a separable exact monad (this includes Bousfield localization as a special case).

Stable derivators can be viewed as an enhancement of triangulated categories. One therefore might ask, given a separable exact monad $M$ on a stable derivator $\mathbb D$, whether the levelwise modules over $\mathbb D$ again form a stable derivator. We do show that this is indeed the case for derivators defined over "small" diagrams; for the general case we only need the extra assumption that the monad is smashing. In particular, this gives a new proof that modules over a separable monoid in a tensor triangulated category $\C$ inherit a natural triangulation from that of $\C$ if $\C$ is the base of a triangulated monoidal derivator $\mathbb D$ (which happens a lot in practice). As an application, we construct examples of derivators that are stable but not strong\footnote{In the literature, the axioms for a stable derivator include strongness; we depart from this by calling a derivator stable if it satisfies all axioms in the literature \textit{except} strongness (see Definitions~\ref{defn: stable} and~\ref{defn: strong})} (see Remark~\ref{rem: non-strong stable derivators}).

\noindent
\textbf{Acknowledgements:} I want to thank my advisor, Paul Balmer, for his support and help. I would also like to thank Kevin Carlson, Moritz Groth and Martin Gallauer for their comments and suggestions. Special thanks to Ian Coley for his invaluable help in editing this paper.

\section{2-Monads and their modules}
For the definition and basics on $2$-categories, the reader is referred to \cite{Kelly-Ross} or \cite{Borceux1}. We will denote by $\mathbf{CAT}$ the $2$-"category" of all categories, and by \textbf{2-CAT}\underline{} the $2$-"category" which has $2$-categories as $0$-cells, $2$-functors as $1$-cells, and $2$-natural transformations as $2$-cells. We summarize some basics on monads and $2$-monads which can all be found in \cite{Kelly-Ross} and \cite{Kelly}.

\begin{defn}Let $\K$ be a $2$-category, and let $x$ be an object of $\K$. A \textbf{monad} on $x$ consists of a triple~$(M, \mu, \mathbb S)$, where $M:x \to x$ is a $1$-cell in $\K$, and $\mu:M^2 \to M$, $\mathbb S:1_{x} \to M$ are $2$-cells such that the two diagrams below commute:
	\begin{equation} \label{eq: monad}
	\begin{tikzcd}
	M^3 \rar["\mu M"] \dar["M \mu"'] & M^2 \dar["\mu"] & M \rar["\mathbb S M"] \drar["1_M"'] & M^2 \dar["\mu"] & M \lar["M \mathbb S"'] \dlar["1_M"] \\
	M^2 \rar["\mu"'] & M &  & M &
	\end{tikzcd}
	\end{equation}
	We will often denote a monad just by $M$, leaving the multiplication and unit out of the notation.
\end{defn}

\begin{exam} \label{exam: Eilenberg-Moore}
	A classical monad is a monad in the $2$-category $\mathbf{CAT}$, see for instance \cite[Chapter VI]{MacLane}. Given a monad $M$ on a category $\C$, an $\mathbf{M}$\textbf{-module} (or $\mathbf{M}$\textbf{-algebra}) is a pair $(X, \lambda)$, where $X$ is an object of $\C$, and $\lambda:MX \to X$ is a morphism, such that the following diagrams commute:
		\begin{equation} \label{eq: module}
		\begin{tikzcd}
		M^2X \rar["M \lambda"] \dar["\mu_X"'] & MX \dar["\lambda"] & X \rar["\mathbb S_X"] \drar["1_X"'] & MX \dar["\lambda"]\\
		MX \rar["\lambda"'] & X &  & X
		\end{tikzcd}
		\end{equation}
	A \textbf{morphism of }$\mathbf{M}$\textbf{-modules} (or $\mathbf{M}$\textbf{-linear map}) $f:(X_1, \lambda_1) \to (X_2, \lambda_2)$ is a morphism $f:X_1 \to X_2$ in $\C$ such that the diagram below commutes:
	\begin{equation} \label{eq: linear map}
	\begin{tikzcd}
	MX_1 \rar["Mf"] \dar["\lambda_1"'] & MX_2 \dar["\lambda_2"] \\
	X_1 \rar["f"'] & X_2
	\end{tikzcd}
	\end{equation}
	The \textbf{Eilenberg-Moore category} $M\Mod_{\C}$ has as objects $M$-modules and as morphisms $M$-linear maps. There is also a functor $F_M: \C \to M \Mod_{\C}$ taking an object $X$ to $(MX, \mu_X)$ and a morphism $f$ to $Mf$. This functor is called the \textbf{free module functor}. There is also a forgetful functor $U_M:  M \Mod_{\C}  \to \C$ forgetting the action of $M$. We actually obtain an adjunction $F_M: \C \leftrightarrows M \Mod_{\C}:U_M$ called the \textbf{Eilenberg-Moore adjunction}. This is described in more detail in~\cite[Chapter VI]{MacLane}. In fact, this works more generally in any $2$-category by considering an action of the monad on $1$-cells instead of objects, see~\cite[Section 3]{Kelly-Ross}.
\end{exam}

\begin{defn}
	Given a $2$-category $\K$, a $\mathbf{2}$\textbf{-monad on }$\mathbf{\K}$ is a monad on $\K$ in the $2$-"category" \textbf{2-CAT}.
\end{defn}
Thus, given a $2$-category $\K$, a $2$-monad on $\K$ consists of a $2$-functor $T:\K \to \K$, together with $2$-natural transformations $\mu:T^2 \to T$ and $\mathbb S:1_{\K} \to T$ making the diagrams \eqref{eq: monad} commute. By forgetting the~$2$-structure of $T$ we get a classical monad $T_1$ on the underlying category $\K_1$ of $\K$. We can thus define~$\mathbf{T}$\textbf{-modules} as $T_1$-modules; a \textbf{strict }$\mathbf{T}$\textbf{-morphism} is a $T_1$-linear map (see Example~\ref{exam: Eilenberg-Moore}).

\begin{defn} \label{defn: lax D-morphism}
	Let $T$ be a $2$-monad on $\K$, and let $(x_1, \rho_1),(x_2, \rho_2)$ be $T$-modules. A \textbf{lax }$\mathbf{T}$\textbf{-morphism} from $(x_1, \rho_1)$ to $(x_2, \rho_2)$ consists of a pair $(f, \overline f)$, where $f:x_1 \to x_2$ is a $1$-cell, and $\overline f$ is a $2$-cell as in the following diagram:
	\[
	\begin{tikzcd}
	Tx_1 \rar["\rho_1"] \dar["Tf"']  & x_1 \dar["f"] \\
	Tx_2 \rar["\rho_2"']\urar[phantom, sloped, "\xRightarrow{\overline f}"] & x_2
	\end{tikzcd}
	\]
	such that we have an equality of pastings:
	\begin{equation}
	\begin{tikzcd}
	T^2x_1 \rar["\mu_{x_1}"] \dar["T^2f"'] & Tx_1 \rar["\rho_1"] \dar["Tf"'] & x_1 \dar["f"'] & T^2x_1 \rar["T \rho_1"] \dar["T^2f"] & Tx_1 \rar["\rho_1"] \dar["Tf"] & x_1 \dar["f"] \\
	T^2x_2 \rar["\mu_{x_2}"'] & Tx_2 \rar["\rho_2"'] \urar[phantom, sloped, "\xRightarrow{\overline f}"] & x_2 \urar[phantom,"="']  &  T^2x_2 \urar[phantom, sloped, "\xRightarrow{T\overline f}"] \rar["T \rho_2"'] & Tx_2 \rar["\rho_2"'] \urar[phantom, sloped, "\xRightarrow{\overline f}"] & x_2
	\end{tikzcd}
	\end{equation}
	and the following pasting is the identity:
	\begin{equation}
	\begin{tikzcd}
	x_1 \rar["\mathbb S_{x_1}"] \dar["f"'] & Tx_1 \rar["\rho_1"] \dar["Tf"'] & x_1 \dar["f"'] \\
	x_2 \rar["\mathbb S_{x_2}"'] & Tx_2 \urar[phantom, sloped, "\xRightarrow{\overline f}"] \rar["\rho_2"'] & x_2
	\end{tikzcd}
	\end{equation}
	The morphism is called \textbf{strong} if $\overline f$ is invertible, and \textbf{strict} if $\overline f$ is the identity. The dual notion is that of an \textbf{oplax }$\mathbf{T}$\textbf{-morphism}.
\end{defn}

\begin{defn} \label{defn: D-2-cell}
	Let $T$ be a $2$-monad on $\K$, and let $(f, \overline f),(g, \overline g):(x_1, \rho_1) \to (x_2, \rho_2)$ be lax $T$-morphisms. A $\mathbf{T}$\textbf{-}$\mathbf{2}$\textbf{-cell} from $f$ to $g$ is a $2$-cell $\a:f \Rightarrow g$ in $\K$ such that the following $2$ pastings are equal:
	\begin{equation}
	\begin{tikzcd}
	Tx_1 \rar["\rho_1"] \dar[bend right=90,"Tf"',""{name=U}] \dar["Tg",""'{name=V}] \arrow[Rightarrow, from=U, to=V,"T\a"] & x_1 \drar[phantom,"="] \dar["g"] & Tx_1 \dar["Tf"] \rar["\rho_1"] & x_1 \dar[bend left=90,"g",""'{name=B}] \dar["f"',""{name=A}] \arrow[Rightarrow, from=A, to=B,"\a"]\\
	Tx_2 \urar[phantom, sloped, "\xRightarrow{\overline g}"] \rar["\rho_2"'] & x_2 & Tx_2 \urar[phantom, sloped, "\xRightarrow{\overline f}"] \rar["\rho_2"'] & x_2
	\end{tikzcd}
	\end{equation}
\end{defn}
Given a $2$-monad $T$ on $\K$, one can then form a $2$-category $T\Mod_{\K}^{\text{lax}}$ which has $T$-modules as $0$-cells, lax-$T$-morphisms as $1$-cells, and $T$-$2$-cells as $2$-cells. The following theorem is stated in~\cite[Theorem 1.5]{Kelly} for the $2$-"category" $\mathbf{CAT}$ but is true for any $2$-category:

\begin{thm} \label{thm: doctrinal right adjunction}
	Let $T$ be a $2$-monad on a $2$-category $\K$. Let $F=(f, \overline f):(x_1, \rho_1) \to (x_2,\rho_2)$ be a lax~$T$-morphism. Then~$F$ admits a right adjoint in $T\Mod_{\K}^{\text{lax}}$ if and only if $f$ admits a right adjoint $g$ in~$\K$ and~$F$ is strong. Furthermore, the lax enrichment $\overline g$ of $g$ is given by the mate of $\overline f^{-1}$, namely as the pasting:
	\begin{equation}
		\begin{tikzcd}
			Tx_2 \ar[r,"Tg"] \ar[dr, bend right,""{name=A}, "1_{Tx_{2}}"'] \arrow[phantom, sloped, from=1-2, to=A, "\xLeftarrow{T\e}"'] & Tx_1 \ar[r,"\rho_1"] \ar[d,"Tf"] & x_1 \ar[d,"f"] \ar[phantom, sloped, dl, "\xLeftarrow{\overline f^{-1}}"] \ar[dr, bend left, "1_{x_1}", ""{name=B}"] \arrow[phantom, sloped, from=2-3, to=B, "\xLeftarrow{\eta}"] & \\
			& Tx_2 \ar[r, "\rho_2"'] & x_2 \ar[r,"g"'] & x_1
		\end{tikzcd}
	\end{equation}
\end{thm}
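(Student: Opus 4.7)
The plan is to prove both directions by means of the standard mate calculus for adjunctions, which is natural in this 2-categorical setting.

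For the "only if" direction, I would first observe that the assignment $(x,\rho)\mapsto x$, $(f,\overline f)\mapsto f$, $\a\mapsto\a$ defines a forgetful 2-functor $U:T\Mod_{\K}^{\text{lax}}\to\K$. Since every 2-functor preserves adjunctions, a right adjoint $(g,\overline g)$ to $F$ in $T\Mod_{\K}^{\text{lax}}$ produces a right adjoint $g$ to $f$ in $\K$, with unit $\eta$ and counit $\e$ that lift to $T$-2-cells. It remains to show that $\overline f$ is invertible. To produce an inverse, I would form the mate of $\overline g$ under the adjunctions $Tf\dashv Tg$ (apply $T$) and $f\dashv g$, namely the pasting of $\overline g$ with $T\eta$ on the left and $\e$ on the right. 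Writing out the $T$-2-cell axioms satisfied by $\eta$ and $\e$ (Definition~\ref{defn: D-2-cell}), together with the triangle identities, yields precisely the two compositions needed to see that this mate is two-sided inverse to $\overline f$.

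For the "if" direction, I would define $\overline g$ by the displayed pasting (the mate of $\overline f^{-1}$) and verify:
\begin{enumerate}
\item $(g,\overline g)$ satisfies the two pasting axioms of a lax $T$-morphism.
\item $\eta:1_{x_1}\Rightarrow gf$ and $\e:fg\Rightarrow 1_{x_2}$ are $T$-2-cells between the appropriate lax $T$-morphisms.
\end{enumerate}
Both verifications are carried out by the standard mates-under-adjunction argument: the multiplication axiom for $\overline g$ is obtained from the multiplication axiom for $\overline f$ by pre- and post-composing with $T^2\eta$, $T\eta$, $\e$, and applying the triangle identities for $f\dashv g$ and $Tf\dashv Tg$ (together with naturality of $\mu$ and the fact that $T$ being a 2-functor preserves the adjunction $f\dashv g$). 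The unit axiom for $\overline g$ reduces to the unit axiom for $\overline f$ in the same manner, using $2$-naturality of $\mathbb S$. Checking that $\eta$ and $\e$ are $T$-2-cells is immediate from the very definition of $\overline g$ as the mate: both defining equalities collapse to triangle identities after inserting $\overline f\circ\overline f^{-1}=1$ or $\overline f^{-1}\circ\overline f=1$.

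Finally, once (i) and (ii) are established, the triangle identities for $\eta,\e$ in $\K$ are the same pasting diagrams in $T\Mod_{\K}^{\text{lax}}$, so $(F,G,\eta,\e)$ is an adjunction there.

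The main obstacle will be bookkeeping in the pasting-diagram manipulations for step (i): both axioms for $\overline g$ involve pasting a $3\times 3$ array of $2$-cells, and one must carefully track the adjunction units/counits inserted to convert the axiom for $\overline f$ into the axiom for $\overline g$. Once the mate correspondence is set up cleanly (noting that $T$ preserves the adjunction $f\dashv g$ with unit $T\eta$ and counit $T\e$, which is where 2-functoriality of $T$ is used), everything else is a routine exercise in the calculus of mates. No other structural input is needed.
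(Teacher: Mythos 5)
The paper gives no proof of this theorem: it simply cites Kelly's \emph{Doctrinal adjunction} (Theorem~1.5 there) and notes the argument goes through in any $2$-category. Your proposal is precisely that standard mate-calculus proof — forgetful $2$-functor plus the bijection between oplax structures on $f$ and lax structures on $g$ induced by $f\dashv g$ and $Tf\dashv Tg$ — and it is correct as outlined.
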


We will also need \cite[Theorem 1.4]{Kelly}, which is again true in any $2$-category:
\begin{thm} \label{thm: doctrinal left adjoint}
	Let $T$ be a $2$-monad on a $2$-category $\K$. Let $U=(g, \overline g)$ be a lax $T$-morphism. Then $U$ has a left adjoint $F$ in $T\Mod_{\K}^{\text{lax}}$ if and only if $g$ has a left adjoint $f$ in $\K$, and the mate $\overline f'$ of $\overline g$ given as the pasting:
	\begin{equation}
		\begin{tikzcd}
			& Tx_1 \ar[r, "\rho_1"] \ar[dr, phantom, sloped,"\xRightarrow{\overline g}"] & x_1 \ar[r, "f"]  & x_2 \\
			Tx_1 \ar[r, "Tf"'] \ar[ur, bend left, "1_{Tx_{1}}",""'{name=A}] \arrow[phantom, sloped, from=2-2, to=A, "\xRightarrow{T \eta}"] & Tx_2 \ar[r,"\rho_2"'] \ar[u,"Tg"'] & x_2 \ar[u, "g"] \ar[ur,bend right,""{name=B}, "1_{x_2}"'] & \arrow[phantom, sloped, from=1-3, to=B, "\xRightarrow{\e}"]
		\end{tikzcd}
	\end{equation}
	is an isomorphism. Then $F=(f, \overline f)$ is necessarily strong, and $\overline f=\overline f'^{-1}$.
\end{thm}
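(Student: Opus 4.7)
The plan is to follow the mate-calculus strategy used for Theorem~\ref{thm: doctrinal right adjunction}; indeed, the present statement is formally dual to that one, via a reversal of $2$-cells in $\K$ that interchanges lax with oplax morphisms and swaps left and right adjoints. I sketch a direct argument along the same lines.

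For the ``only if'' direction, assume $F = (f, \overline f) \dashv U = (g, \overline g)$ in $T\Mod_{\K}^{\text{lax}}$, with unit $\eta$ and counit $\e$. Applying the forgetful $2$-functor $T\Mod_{\K}^{\text{lax}} \to \K$, which preserves adjunctions, yields $f \dashv g$ in $\K$ with the same $\eta$ and $\e$. The hypothesis that $\eta$ and $\e$ are $T$-$2$-cells (Definition~\ref{defn: D-2-cell}) provides two pasting identities relating $\overline f$ and $\overline g$. A routine calculation, using these identities together with the triangle equations for $\eta, \e$ and their images $T\eta, T\e$ under the $2$-functor $T$, shows that the cell $\overline f'$ defined in the statement is a two-sided inverse to $\overline f$. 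In particular $F$ is strong and $\overline f = (\overline f')^{-1}$.

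For the ``if'' direction, assume $f \dashv g$ in $\K$ with unit $\eta$, counit $\e$, and assume the mate $\overline f'$ is invertible; set $\overline f := (\overline f')^{-1}$. Two things must be verified: that $(f, \overline f)$ is a lax (automatically strong) $T$-morphism, and that $\eta$ and $\e$ are $T$-$2$-cells. The second is essentially built into the construction: the pasting that defines $\overline f'$ from $\overline g$, read the other way, is precisely the $T$-$2$-cell equation for $\eta:1_{x_1} \Rightarrow gf$; the analogous equation for $\e$ then follows by invoking the triangle identities for $f \dashv g$. For the first, the $\mu$- and $\mathbb S$-compatibility axioms for $(f, \overline f)$ (Definition~\ref{defn: lax D-morphism}) follow from those of $(g, \overline g)$ by a standard mate argument: the adjunctions $Tf \dashv Tg$ and $T^2 f \dashv T^2 g$ exist because $T$ is a $2$-functor, and under the induced mate correspondences each axiom for $\overline g$ transports to the corresponding axiom for $\overline f'$, hence, by inversion, for $\overline f$.

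The only real obstacle is the $2$-categorical bookkeeping in the two verifications above. These are routine pasting manipulations driven by the triangle identities and the functoriality of the mate correspondence, with no conceptual difficulty beyond what is already present in the proof of Theorem~\ref{thm: doctrinal right adjunction}; in fact, the whole statement can alternatively be deduced from that theorem by invoking the duality mentioned at the outset.
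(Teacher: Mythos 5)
The paper does not actually prove this statement: it is quoted as \cite[Theorem 1.4]{Kelly} with the remark that Kelly's argument for $\K=\mathbf{CAT}$ goes through verbatim in any $2$-category. Your sketch correctly reproduces that argument — the forgetful $2$-functor detects the underlying adjunction $f\dashv g$, the $T$-$2$-cell conditions on $\eta$ and $\e$ unwind (via interchange and the triangle identities) to the statement that the mate $\overline f'$ of $\overline g$ is a two-sided inverse of $\overline f$, and conversely the mate correspondence for the adjunctions $Tf\dashv Tg$, $T^2f\dashv T^2g$ transports the lax-morphism axioms for $\overline g$ to the oplax axioms for $\overline f'$ and hence, after inversion, to the lax axioms for $\overline f$. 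So the substance is right and is exactly the proof the paper is implicitly invoking.

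One small correction: your closing claim that the statement ``can alternatively be deduced from Theorem~\ref{thm: doctrinal right adjunction} by the duality mentioned at the outset'' does not work as stated. Reversing $2$-cells ($\K\mapsto\K^{\mathrm{co}}$) swaps left and right adjoints but also converts lax $T$-morphisms into oplax ones, so the $2$-cell dual of Theorem~\ref{thm: doctrinal right adjunction} is a statement about oplax morphisms admitting left adjoints in $T\Mod_{\K}^{\text{oplax}}$, not the present statement about lax morphisms in $T\Mod_{\K}^{\text{lax}}$. Theorems~\ref{thm: doctrinal right adjunction} and~\ref{thm: doctrinal left adjoint} are two faces of a single characterization of adjunctions in $T\Mod_{\K}^{\text{lax}}$ (namely $F\dashv U$ there iff $f\dashv g$ in $\K$, $F$ is strong, and $\overline g$ is the mate of $\overline f^{-1}$) rather than formal duals of each other; your direct argument is the right route, and the duality aside should be dropped or rephrased.
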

\begin{exam} \label{exam: 2-functors as algebras over a 2-monad}
	The $2$-category $\K=2\text{-}\mathbf{CAT}$ has all coproducts and in fact taking coproducts is a $2$-functor $\K \times \K \to \K$. The $2$-functor~$T:\K \times \K \to \K \times \K$ given by $(x,y) \mapsto (x, x \coprod y)$ is in fact a~$2$-monad; its multiplication and unit are given by the canonical $2$-functors $(x,x\coprod x \coprod y) \to (x,x\coprod y)$ and~$(x,y) \to (x,x \coprod y)$ respectively.
	
	A $T$-module consists of a pair $(x,y) \in \K \times \K$ together with a $1$-cell $(x, x \coprod y)=T(x,y) \to (x,y)$ in~$\K \times \K$ such that~\eqref{eq: module} commute (with $T$ for $M$ and $(x,y)$ for $X$). From this we see that the $1$-cells~$x \to x$ and~$y \to y$ are actually identities; hence a $T$-module is exactly a $1$-cell in $\K$ (a $2$-functor). Similarly, we see that lax~$T$-morphisms are exactly lax natural transformations, and $T\text{-}2\text{-}$cells are modifications. In short,~$T\Mod_{\K}^{\text{lax}}$ is the $2$-category of (strict) $2$-functors, lax natural transformations and modifications.
\end{exam}

\section{2-functoriality of levelwise modules}
Let us recall that the algebraic simplex category or augmented simplex category $\Delta_+$ has objects finite ordinal numbers $\underline n=\{0,1,...,n-1\}$ (where $\underline 0= \emptyset$), and morphisms order-preserving maps. This can be made in a strict monoidal category under the usual ordinal addition (see \cite[VII.5]{MacLane}). Thus, we can consider $\underline{\Delta_+}$, the $2$-category on one object with endomorphism category $\Delta_+$ and horizontal composition given by ordinal addition. For any $2$-category $\K$, let~$\text{Mon}_{\text{lax}}(\K)= \mathbf{2}\mathbf{-CAT}_{\text{lax}}(\underline{\Delta_+}, \K)$ be the $2$-category of $2$-functors $\underline{\Delta_+} \to \K$, lax natural transformations and modifications.

\begin{defn} \label{defn: monadic}
	Consider the $2$-"category" $\K=\text{Mon}_{\text{lax}}(\mathbf{CAT})$. Its $0$-cells will be called \textbf{monadic categories}, its $1$-cells \textbf{lax monadic functors}, and its $2$-cells \textbf{monadic natural transformations}.
\end{defn}
\noindent Let us spell out what this $2$-category actually is:
\begin{enumerate}
	\item Its $0$-cells are just $2$-functors $\underline{\Delta_+} \to \mathbf{CAT}$. Let $\C$ be the category that is the image of the unique~$0$-cell in $\underline{\Delta_+}$ under this $2$-functor. The endomorphism category $\underline{End}(\C)$ is a strict monoidal category under composition; the above being a $2$-functor just means we have a strict monoidal functor~$\Delta_+ \to \underline{End}(\C)$. By~\cite[Proposition VII.5.1]{MacLane} this corresponds exactly to a monad on~$\C$. Hence,~$0\text{-cells}$ are pairs $(\C,M)$ of a category with a monad on it.
	\item Similarly, we see that $1$-cells $(\C,M) \to (\C',M')$ are given by pairs $(F, \phi)$ where $F:\C \to \C'$ is a functor and $\phi: M'F \Rightarrow FM$ is a natural transformation such that the diagrams below commute:
	\begin{equation} \label{eq: monadic functor}
	\begin{tikzcd}
	M'^2F \rar["M' \phi"] \dar["\mu' F"'] & M'FM \rar["\phi M"]  & FM^2 \dar["F \mu"] & F \rar["\mathbb S' F"] \drar["F \mathbb S"'] & M'F \dar["\phi"]\\
	M'F \ar[rr,"\phi"']  & & FM & & FM
	\end{tikzcd}
	\end{equation}
	\item Its $2$-cells $(F_1, \phi_1) \Rightarrow (F_2, \phi_2):(\C,M) \to (\C',M')$ are just natural transformations $\a:F_1 \Rightarrow F_2$ such that the diagram below commutes:
	\begin{equation} \label{eq: monadic natural transformation}
	\begin{tikzcd}
	M'F_1 \ar[r, "M' \a"] \ar[d,"\phi_1"'] & M'F_2 \ar[d, "\phi_2"] \\
	F_1M \ar[r,"\a M"'] & F_2M
	\end{tikzcd}
	\end{equation}
	\item The various compositions and identities are the obvious ones.
\end{enumerate}
\begin{defn} \label{defn: oplax, strong and strict monadic functors}
	 An \textbf{oplax monadic functor} is a $1$-cell in $\text{Mon}_{\text{oplax}}(\K)= \mathbf{2}\mathbf{-CAT}_{\text{oplax}}(\underline{\Delta_+}, \K)$, the~$2$-category of $2$-functors $\underline{\Delta_+} \to \K$, oplax natural transformations and modifications. An (op)lax monadic functor whose structure $2$-cell is invertible will be called \textbf{strong}; if it is the identity it will be called \textbf{strict}.
\end{defn}
\begin{prop} \label{prop: 2-functor sending a monadic category to its modules}
	The assignment~$(\C,M)~\mapsto~M\Mod_{\C}$ extends to a $2$-functor~$\mathbf{Mod}:\operatorname{Mon}_{\operatorname{lax}}(\mathbf{CAT})~\to~\mathbf{CAT}$.
\end{prop}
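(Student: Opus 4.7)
The plan is to define $\mathbf{Mod}$ level by level on 0-, 1-, and 2-cells, verify that the data produced lands in the target (i.e.\ module axioms, linearity, naturality hold), and then check that identities and the two compositions (vertical and horizontal/1-cell) are preserved. At the level of 0-cells, we send $(\C, M)$ to the Eilenberg-Moore category $M\Mod_{\C}$ as in Example~\ref{exam: Eilenberg-Moore}.

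For a lax monadic functor $(F, \phi) \colon (\C, M) \to (\C', M')$, I define $\mathbf{Mod}(F, \phi) \colon M\Mod_{\C} \to M'\Mod_{\C'}$ on objects by $(X, \lambda) \mapsto (FX, F\lambda \circ \phi_X)$ and on morphisms by $f \mapsto Ff$. The first task is to show that $(FX, F\lambda \circ \phi_X)$ is indeed an $M'$-module: the associativity square of \eqref{eq: module} for the new action unfolds into a diagram that combines the left square of \eqref{eq: monadic functor} (compatibility of $\phi$ with the multiplications), the naturality of $\phi$ applied to $\lambda \colon MX \to X$, and the associativity square of \eqref{eq: module} for $(X, \lambda)$. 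The unit axiom is analogously forced by the right triangle of \eqref{eq: monadic functor} and the unit axiom for $(X, \lambda)$. The assignment on morphisms is well-defined because if $f$ is $M$-linear then naturality of $\phi$ against $f$ combined with the linearity square of \eqref{eq: linear map} yields the linearity square for $Ff$ with respect to the new actions. Functoriality in $f$ is immediate since $F$ is already a functor.

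For a monadic 2-cell $\alpha \colon (F_1, \phi_1) \Rightarrow (F_2, \phi_2)$, I define $\mathbf{Mod}(\alpha)$ to be the natural transformation whose component at $(X, \lambda)$ is $\alpha_X \colon F_1 X \to F_2 X$. Compatibility \eqref{eq: monadic natural transformation} for $\alpha$, followed by naturality of $F_2$-like stuff --- concretely, the square for $\alpha$ composed with $F_i \lambda$ on the right --- yields exactly the $M'$-linearity square for $\alpha_X$ between $(F_1 X, F_1 \lambda \comp (\phi_1)_X)$ and $(F_2 X, F_2 \lambda \comp (\phi_2)_X)$. Naturality of $\mathbf{Mod}(\alpha)$ in $(X, \lambda)$ is immediate from the naturality of $\alpha$ viewed in $\C'$.

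It remains to verify the 2-functor axioms. The identity lax monadic functor on $(\C, M)$ is $(1_\C, 1_M)$, and the induced action on a module is $1_X \comp \lambda = \lambda$, so $\mathbf{Mod}(1_{(\C,M)}) = 1_{M\Mod_{\C}}$; identity 2-cells go to identity natural transformations trivially. For composition of 1-cells $(F, \phi) \colon (\C, M) \to (\C', M')$ and $(G, \psi) \colon (\C', M') \to (\C'', M'')$, whose composite in $\operatorname{Mon}_{\operatorname{lax}}(\mathbf{CAT})$ is $(GF, G\phi \comp \psi F)$, a direct computation using the definition of the new action shows that both sides send $(X, \lambda)$ to $(GFX, GF\lambda \comp G\phi_X \comp \psi_{FX})$, giving $\mathbf{Mod}(G\phi \comp \psi F, GF) = \mathbf{Mod}(G, \psi) \comp \mathbf{Mod}(F, \phi)$. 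Vertical and horizontal composition of 2-cells pass through $\mathbf{Mod}$ because at each object $(X, \lambda)$ the relevant component is simply a (whiskered) composite of components of the underlying natural transformations in $\mathbf{CAT}$. The main obstacle, as far as one exists, is purely notational bookkeeping: the verifications are all routine diagram chases, and the only real content is identifying the correct candidate for the pushed-forward action, namely $F\lambda \comp \phi_X$, which is dictated by the direction of $\phi \colon M'F \Rightarrow FM$.
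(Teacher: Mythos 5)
Your proposal is correct and follows essentially the same route as the paper: the same formula $(X,\lambda)\mapsto(FX, F\lambda\circ\phi_X)$ for the pushed-forward action, the same diagram chases via \eqref{eq: monadic functor}, naturality of $\phi$, and \eqref{eq: monadic natural transformation}, and the same componentwise definition on $2$-cells. The only differences are cosmetic: you spell out the $2$-functoriality axioms that the paper declares ``straightforward and omitted,'' and your description of the action on morphisms as $f\mapsto Ff$ is the correct reading of what the paper loosely calls ``the identity.''
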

\begin{proof} First, we define $\mathbf{Mod}$ on $1$-cells: Let $(F, \phi): (\C,M) \to (\C',M')$ be a monadic functor and let~$(X, \lambda) \in M \Mod_{\C}$. Note that the two diagrams below commute:
	\begin{equation*}
	\begin{tikzcd}
	M'^2FX\ar[dd, "\mu'_{FX}"'] \rar["M' \phi_X"] &
	M'FMX \rar["M'F \lambda"] \dar["\phi_{MX}"] &
	M'FX \dar["\phi_X"]  & & 		FX \rar["\mathbb S'_{FX}"] \ar[dr, "F \mathbb S_X"'] \ar[ddr, bend right=50, "1_{FX}"'] &
	M'FX \dar["\phi_X"]  \\
	&
	FM^2X \rar["FM \lambda"] \dar["F \mu_X"] &
	FMX \dar["F \lambda"] & & & FMX \dar["F \lambda"] \\
	M'FX \rar["\phi_X"] &
	FMX \rar["F \lambda"] &
	FX & & & FX
	\end{tikzcd}
	\end{equation*}
	(for the left diagram: left square by~\eqref{eq: monadic functor}, top right square by naturality of $\phi$, and bottom right square because $\lambda$ makes $X$ into an $M$-module; for the right diagram: top triangle by~\eqref{eq: monadic functor}, bottom because $\lambda$ makes $X$ into an $M$-module). This shows that $(FX, F \lambda \circ \phi_X)$ is an $M'$-module. Furthermore, given an~$M$-linear map $f:(X_1, \lambda_1) \to (X_2, \lambda_2)$ the following diagram commutes:
	\begin{equation*}
	\begin{tikzcd}
	M'FX_1 \rar["M'Ff"] \dar["\phi_{X_1}"] &
	M'FX_2 \dar["\phi_{X_2}"] \\
	FMX_1 \rar["FMf"] \dar["F \lambda_1"] &
	FMX_2 \dar["F \lambda_2"] \\
	FX_1 \rar["Ff"] &
	FX_2
	\end{tikzcd}
	\end{equation*}
	(top square by naturality of $\phi$ and bottom because $f$ is $M$-linear). Hence, we get a well-defined functor~$\overline F:~M\Mod_{\C}~\to~M'\Mod_{\C'}$ given on objects by $(X, \lambda) \mapsto (FX, F \lambda \circ \phi_X)$ and on morphisms as the identity. We define $\mathbf{Mod}(F, \phi)= \overline F$ on $1$-cells.
	
	We define $\mathbf{Mod}$ on $2$-cells: Let $\a:(F_1, \phi_1) \to (F_2, \phi_2)$ a monadic natural transformation between lax monadic functors $(\C,M) \to (\C',M')$. It is then straightforward from~\eqref{eq: monadic natural transformation} and naturality of $\a$ that for any $M$-module $(X, \lambda)$, $a_X$ is actually an $M'$-linear map \[(F_1X, F_1 \lambda \circ (\phi_1)_X) \to (F_2X, F_2 \lambda \circ (\phi_2)_X) \]
	and hence $\beta_{(X, \lambda)}=\a_X$ defines a natural transformation $\mathbf{Mod}(F_1, \phi_1) \to \mathbf{Mod}(F_2,\phi_2)$.
	
	Verification of $2$-functoriality is straightforward and omitted.
\end{proof}

\begin{rem} Let $\C, \C'$ be two monoidal categories and let $F: \C \to \C'$ be a lax monoidal functor. If $A$ is a monoid in $\C$, then $A'=FA$ is canonically a monoid in $\C'$ and $F$ descends to a functor~$A\Mod_{\C}~\to~A'\Mod_{\C'}$.
\end{rem}

\begin{cor} \label{cor: monadic left adjoint}
	Let $(F, \phi):(\C,M) \to (\C',M')$ be a strong monadic functor. If $F$ admits a right adjoint, then so does $\mathbf{Mod}(F)$.
\end{cor}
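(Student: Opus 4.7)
My plan is to combine Theorem~\ref{thm: doctrinal right adjunction} with the $2$-functoriality of $\mathbf{Mod}$ supplied by Proposition~\ref{prop: 2-functor sending a monadic category to its modules}. First I would observe that Example~\ref{exam: 2-functors as algebras over a 2-monad} exhibits any $2$-category of $2$-functors, lax natural transformations, and modifications as a $2$-category $T\mathbf{Mod}^{\text{lax}}$ for some $2$-monad $T$ on a product $2$-category. Taking the source to be $\underline{\Delta_+}$ and the target to be $\mathbf{CAT}$ identifies $\operatorname{Mon}_{\operatorname{lax}}(\mathbf{CAT})$ in this form, so that the doctrinal right adjunction theorem becomes directly applicable in it.

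Next I would apply Theorem~\ref{thm: doctrinal right adjunction} to the lax $T$-morphism $(F, \phi)$. By hypothesis $F$ admits a right adjoint $G$ in $\mathbf{CAT}$, and $(F, \phi)$ is strong, meaning $\phi$ is invertible. The theorem therefore produces a lax monadic functor $(G, \psi) : (\C', M') \to (\C, M)$, with $\psi$ given as the mate of $\phi^{-1}$ under $F \dashv G$, together with unit and counit $2$-cells promoting the ordinary adjunction to an honest adjunction $(F, \phi) \dashv (G, \psi)$ inside $\operatorname{Mon}_{\operatorname{lax}}(\mathbf{CAT})$.

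The final step is to push this adjunction through $\mathbf{Mod}$. Since $2$-functors preserve the data and equations defining an adjunction (they send $1$-cells and $2$-cells to $1$-cells and $2$-cells compatibly with composition and identities, so the triangle identities are preserved), applying $\mathbf{Mod}$ yields $\mathbf{Mod}(F, \phi) \dashv \mathbf{Mod}(G, \psi)$ in $\mathbf{CAT}$, which is exactly what is needed. I expect the only non-routine point to be the opening identification: verifying that $\operatorname{Mon}_{\operatorname{lax}}(\mathbf{CAT})$ really fits the $T\mathbf{Mod}^{\text{lax}}$ template of Example~\ref{exam: 2-functors as algebras over a 2-monad} in the precise form that Theorem~\ref{thm: doctrinal right adjunction} requires. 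Once that identification is in hand, the invocation of the doctrinal theorem and the appeal to $2$-functoriality are purely formal.
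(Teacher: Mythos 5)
Your argument is exactly the paper's: identify $\operatorname{Mon}_{\operatorname{lax}}(\mathbf{CAT})$ with a $2$-category of modules over a $2$-monad via Example~\ref{exam: 2-functors as algebras over a 2-monad}, apply Theorem~\ref{thm: doctrinal right adjunction} to the strong lax morphism $(F,\phi)$ (using that $F$ has a right adjoint and $\phi$ is invertible) to obtain an adjunction in $\operatorname{Mon}_{\operatorname{lax}}(\mathbf{CAT})$, and then push it through the $2$-functor $\mathbf{Mod}$ of Proposition~\ref{prop: 2-functor sending a monadic category to its modules}, which preserves adjunctions. This is correct and essentially identical to the paper's proof.
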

\begin{proof}
	By Proposition~\ref{prop: 2-functor sending a monadic category to its modules}, since a $2$-functor sends adjunctions to adjunctions, it is enough to show $(F, \phi)$ admits a right adjoint in $\K=\text{Mon}_{\text{lax}}(\mathbf{CAT})$. By Example~\ref{exam: 2-functors as algebras over a 2-monad}, this $2$-category is equivalent to the~$2$-category of modules over a certain $2$-monad. Hence the result follows by Theorem~\ref{thm: doctrinal right adjunction}.
\end{proof}

\begin{cor} \label{cor: monadic right adjoint}
	Let $(G,\psi):(\C',M') \to (\C,M)$ be a lax monadic functor. Assume $G$ has a left adjoint~$F$, and that the mate $\phi'$ of $\psi$ given as the pasting:
	\begin{equation}
	\begin{tikzcd}
	& \C \ar[r, "M"] \ar[dr, phantom, sloped,"\xRightarrow{\psi}"] & \C \ar[r, "F"]  & \C' \\
	\C \ar[r, "M"'] \ar[ur, bend left, "1_{\C}",""'{name=A}] \arrow[phantom, sloped, from=2-2, to=A, "\xRightarrow{ \eta}"] & \C' \ar[r,"M'"'] \ar[u,"G"'] & \C' \ar[u, "G"] \ar[ur,bend right,""{name=B}, "1_{\C'}"'] & \arrow[phantom, sloped, from=1-3, to=B, "\xRightarrow{\e}"]
	\end{tikzcd}
	\end{equation}
	is invertible. Then $\mathbf{Mod}(G)$ has a left adjoint.
\end{cor}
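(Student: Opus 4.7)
The plan is to mirror the argument of Corollary~\ref{cor: monadic left adjoint}, replacing Theorem~\ref{thm: doctrinal right adjunction} with its dual Theorem~\ref{thm: doctrinal left adjoint}. Since the $2$-functor $\mathbf{Mod}$ of Proposition~\ref{prop: 2-functor sending a monadic category to its modules} sends adjunctions to adjunctions, it suffices to exhibit a left adjoint of $(G, \psi)$ in $\K = \text{Mon}_{\text{lax}}(\mathbf{CAT})$, and then apply $\mathbf{Mod}$ to it.

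To access the doctrinal adjunction machinery, I will invoke Example~\ref{exam: 2-functors as algebras over a 2-monad} to identify $\K$ with a suitable $2$-category of lax modules over a $2$-monad $T$. Under this identification, the lax monadic functor $(G, \psi)$ becomes a lax $T$-morphism $U = (g, \overline g)$, with $g$ playing the role of $G$ and $\overline g$ the role of $\psi$. Theorem~\ref{thm: doctrinal left adjoint} then furnishes a clean criterion: $U$ has a left adjoint in $T\Mod_{\K}^{\text{lax}}$ if and only if $g$ has a left adjoint in the ambient $2$-category and the mate of $\overline g$ is invertible. By hypothesis, $G$ admits the left adjoint $F$ in $\mathbf{CAT}$, and the mate $\phi'$ of $\psi$ displayed in the statement is invertible; these are exactly the two conditions required. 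Hence $(G,\psi)$ admits a left adjoint in $\K$, which must be a strong monadic functor by Theorem~\ref{thm: doctrinal left adjoint}, and applying $\mathbf{Mod}$ yields the desired left adjoint of $\mathbf{Mod}(G)$.

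The only mildly delicate point is to verify that the pasting defining $\phi'$ in the statement of the corollary agrees with the abstract mate construction of Theorem~\ref{thm: doctrinal left adjoint} once we transport everything through the identification of Example~\ref{exam: 2-functors as algebras over a 2-monad}. This is a routine translation of diagrams, strictly dual to what occurs implicitly in the proof of Corollary~\ref{cor: monadic left adjoint}, and no new ideas are needed.
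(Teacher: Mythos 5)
Your proposal is correct and follows exactly the paper's own argument: reduce via the $2$-functor $\mathbf{Mod}$ of Proposition~\ref{prop: 2-functor sending a monadic category to its modules} to producing a left adjoint of $(G,\psi)$ in $\text{Mon}_{\text{lax}}(\mathbf{CAT})$, identify that $2$-category with lax modules over a $2$-monad via Example~\ref{exam: 2-functors as algebras over a 2-monad}, and apply Theorem~\ref{thm: doctrinal left adjoint}. Your closing remark about matching the displayed pasting with the abstract mate is a fair (and correctly flagged as routine) point that the paper leaves implicit.
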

\begin{proof}
By Proposition~\ref{prop: 2-functor sending a monadic category to its modules}, since a $2$-functor sends adjunctions to adjunctions, it is enough to show $(G, \psi)$ admits a left adjoint in $\K=\text{Mon}_{\text{lax}}(\mathbf{CAT})$. By Example~\ref{exam: 2-functors as algebras over a 2-monad}, this $2$-category is equivalent to the $2$-category of modules over a certain $2$-monad. Hence the result follows by Theorem~\ref{thm: doctrinal left adjoint}.
\end{proof}

We now turn our attention to prederivators: we refer to Appendix~\ref{section: basics} for some basics or to \cite{Groth} for more detail. From now on, $\mathbf{Dia}$ will be a fixed diagram category (see definition \ref{defn: diagram category}). Unless specified otherwise, all (pre)derivators will be of domain~$\mathbf{Dia}$. There is a $2$-category of prederivators $\mathbf{PDer}$ with morphisms pseudonatural transformations, and $2$-morphisms modifications (see Appendix~\ref{section: 2-stuff} for the precise definition of pseudonatural transformations and modifications in this context or~\cite[Chapter 2]{Groth} for more details). In particular, we can consider monads in the $2$-category $\mathbf{PDer}$. A monad on a prederivator~$\mathbb D$ will consist of an endomorphism $M$ on $\mathbb D$ together with a modification $\mu:M^2 \to M$ and a modification~$\mathbb S:1_{\mathbb D} \to M$ such that the diagrams~\ref{eq: monad} commute. In particular, for each category~$J \in \mathbf{Dia}$, the triple~$(M_j, \mu_J, \mathbb S_J)$ defines a monad on $\mathbb D(J)$. In the remainder of this section we show that the~$M_J \Mod_{\mathbb D(J)}$ assemble to a prederivator, and the levelwise Eilenberg-Moore adjunctions ``glue'' to an adjunction in $\PDer$.

\begin{lem} \label{lem: lift to monadic categories}
	Let $M$ be a monad on a prederivator $\mathbb D$. Then $\mathbb D$ admits a lift $\widetilde{\mathbb D}$ given on $0$-cells by~$\mathbf{Dia}~\ni~J~\mapsto~(\mathbb D(J),~M_J)$ against the forgetful $2$-functor $U:\operatorname{Mon}_{\operatorname{lax}}(\mathbf{CAT}) \to \mathbf{CAT}$,\ie we have a factorization:
	\begin{equation*}
		\begin{tikzcd}
			& \operatorname{Mon}_{\operatorname{lax}}(\mathbf{CAT}) \ar[d, "U"] \\
		\mathbf{Dia}^{\text{op}} \ar[ru, dotted,"\widetilde{\mathbb D}"] \ar[r,"\mathbb D"'] & \mathbf{CAT}
		\end{tikzcd}
	\end{equation*}
\end{lem}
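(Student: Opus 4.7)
The plan is to repackage the data of the monad $(M, \mu, \mathbb{S})$ on $\mathbb{D}$ in $\mathbf{PDer}$ fibrewise, in light of the explicit description of $\operatorname{Mon}_{\operatorname{lax}}(\mathbf{CAT})$ given right after Definition~\ref{defn: monadic}. Since $M: \mathbb{D} \to \mathbb{D}$ is a pseudonatural transformation, it consists of functors $M_J: \mathbb{D}(J) \to \mathbb{D}(J)$ together with, for each $u: J \to K$ in $\mathbf{Dia}$, an invertible $2$-cell $\gamma_u: u^* M_K \Rightarrow M_J u^*$ satisfying the pseudonaturality coherences. The modifications $\mu$ and $\mathbb{S}$ give natural transformations $\mu_J: M_J^2 \to M_J$ and $\mathbb{S}_J: 1_{\mathbb{D}(J)} \to M_J$ which make $(M_J, \mu_J, \mathbb{S}_J)$ a monad on $\mathbb{D}(J)$ by evaluating \eqref{eq: monad} at $J$; this defines $\widetilde{\mathbb{D}}$ on $0$-cells.

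On $1$-cells, I would send $u: J \to K$ to the pair $(u^*, \gamma_u^{-1})$, regarded as a (strong, hence lax) monadic functor $(\mathbb{D}(K), M_K) \to (\mathbb{D}(J), M_J)$. The two diagrams in \eqref{eq: monadic functor} that have to be checked are precisely the modification axioms for $\mu: M^2 \Rightarrow M$ and $\mathbb{S}: 1_{\mathbb{D}} \Rightarrow M$ at $u$: one uses that the pseudonaturality $2$-cell of $M^2$ at $u$ is the pasting $(M_J \gamma_u) \cdot (\gamma_u M_K)$, after which the desired equality reduces algebraically to the modification equation by cancelling $\gamma_u$ with $\gamma_u^{-1}$. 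For $2$-cells, given $\alpha: u \Rightarrow v$ in $\mathbf{Dia}(J, K)$, the compatibility of the pseudonatural structure of $M$ with whiskering by $\alpha^*$ is exactly the square \eqref{eq: monadic natural transformation} for $\alpha^*$, so $\widetilde{\mathbb{D}}(\alpha) := \alpha^*$ is a monadic natural transformation.

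Strict $2$-functoriality of $\widetilde{\mathbb{D}}$ then follows from strict $2$-functoriality of $\mathbb{D}$ together with the composition and identity coherences of the pseudonatural transformation $M$, which force the cells $\gamma_{(-)}^{-1}$ to assemble on the nose; and the factorisation $U \widetilde{\mathbb{D}} = \mathbb{D}$ is transparent from the construction, since $U$ simply forgets the monad data. The only real obstacle is the bookkeeping: keeping the directions of all $2$-cells consistent and matching each axiom of a (lax) monadic functor with the correct modification or pseudonaturality axiom. No genuinely new ideas are required beyond unpacking definitions.
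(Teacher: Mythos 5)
Your proposal is correct and follows essentially the same route as the paper: define $\widetilde{\mathbb D}$ on $1$-cells by $u\mapsto (u^*,(\gamma^M_u)^{-1})$ and on $2$-cells by $\alpha\mapsto\alpha^*$, verifying the lax monadic functor axioms \eqref{eq: monadic functor} via the modification condition \eqref{eq: modification} for $\mu$ and $\mathbb S$, and the monadic naturality square \eqref{eq: monadic natural transformation} via the pseudonaturality coherence \eqref{eq: 2-coherence} of $M$. Your write-up is in fact somewhat more explicit than the paper's (which leaves the unpacking of $\gamma^{M^2}_u$ as a pasting implicit), but there is no substantive difference.
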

\begin{proof}
	Define $\widetilde{\mathbb D}$ on $1$-cells by~$u \mapsto (u^*, (\gamma^M_u)^{-1})$ (where $\gamma^M_u$ is the coherence isomorphism of the pseudonatural transformation $M$, see Appendix~\ref{section: 2-stuff}), and as the identity on $2$-cells.
	
	Given a functor $u:J \to K$ in $\mathbf{Dia}$, it follows that $(u^*, (\gamma^M_u)^{-1})$ is a lax (even strong) monadic functor by~\eqref{eq: modification} for $\mu$ and $\mathbb S$; and given a natural transformation as in
	\begin{equation*}
	\begin{tikzcd}
	J \rar[bend left=40, "u",""'{name=U}] \rar[bend right=40, "v"',""{name=V}] & K \arrow[Rightarrow, from=U, to=V, "\a"]
	\end{tikzcd}
	\end{equation*}
	it is clear by~\eqref{eq: 2-coherence} for $M$, that $\a^*$ is actually a monadic natural transformation from~$(u^*, (\gamma^M_u)^{-1})$ to~$(v^*, (\gamma^M_v)^{-1})$. Hence $\widetilde{\mathbb D}$ is actually well-defined, and its $2$-functoriality is immediately verified.
	 \end{proof}

\begin{prop} \label{prop: prederivator}
Let $M$ be a monad on a prederivator $\mathbb D$. The assignment:
\begin{equation*}
\mathbf{Dia}  \ni J \mapsto M_J\Mod_{\mathbb D(J)}
\end{equation*}
extends to a prederivator $M\Mod_{\mathbb D}: \mathbf{Dia}^{op} \to \mathbf{CAT}$. Furthermore, the free module functors~$F_{M,J}:\mathbb D(J)~\to~M_J\Mod_{\mathbb D(J)}$ glue together to a morphism of prederivators $F_M:\mathbb D \to M\Mod_{\mathbb D}$ and the forgetful functors $U_{M,J}:M_J\Mod_{\mathbb D(J)} \to \mathbb D(J)$ glue to a strict morphism of prederivators $U_M: M\Mod_{\mathbb D} \to \mathbb D$ such that $F_M$ is left adjoint to $U_M$ in the $2$-category $\mathbf{PDer}$ and $M=U_MF_M$.
\end{prop}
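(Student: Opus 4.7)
The plan is to assemble $M\Mod_{\mathbb D}$ as the composite
\[
\mathbf{Dia}^{\op} \xrightarrow{\widetilde{\mathbb D}} \operatorname{Mon}_{\operatorname{lax}}(\mathbf{CAT}) \xrightarrow{\mathbf{Mod}} \mathbf{CAT}
\]
of the two $2$-functors provided by Lemma~\ref{lem: lift to monadic categories} and Proposition~\ref{prop: 2-functor sending a monadic category to its modules}. Since the composition of $2$-functors is a $2$-functor, this is automatically a prederivator, and by construction it sends $J$ to $M_J\Mod_{\mathbb D(J)}$; its restriction functor along $u\colon J \to K$ is the functor $\overline{u^*}$ built by Proposition~\ref{prop: 2-functor sending a monadic category to its modules} from the strong monadic functor $(u^*, (\gamma^M_u)^{-1})$, namely $\overline{u^*}(X,\lambda) = (u^*X,\, u^*\lambda \circ (\gamma^M_u)^{-1}_X)$ on objects and $u^*$ on morphisms.

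Next I would build the two morphisms $U_M$ and $F_M$. For $U_M$, the formula above gives $U_{M,J} \circ \overline{u^*} = u^* \circ U_{M,K}$ on the nose, and the same equality holds on $2$-cells, so the forgetful functors glue into a strict morphism. For $F_M$, I would define the coherence $2$-cell $\gamma^{F_M}_u\colon \overline{u^*} F_{M,K} \Rightarrow F_{M,J} u^*$ by $(\gamma^M_u)_X\colon u^*M_KX \to M_Ju^*X$; the verification that this component is actually $M_J$-linear as a map from $(u^*M_KX,\, u^*\mu_X \circ (\gamma^M_u)^{-1}_{M_KX})$ to $(M_Ju^*X, \mu_{u^*X})$ is precisely the modification axiom~\eqref{eq: modification} applied to $\mu$. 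The pseudonaturality coherences for $F_M$ then reduce to those for $M$ itself together with the modification axiom for $\mathbb S$. Since $U_M$ is strict and $\gamma^{F_M}_u$ has underlying morphism $\gamma^M_u$, the equality $M = U_M F_M$ holds on the nose both for the underlying functors and for the coherence $2$-cells.

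For the adjunction $F_M \dashv U_M$ in $\PDer$, I would take as unit the modification $\mathbb S\colon 1_{\mathbb D} \Rightarrow M = U_M F_M$ coming with the monad, and define the counit $\e$ levelwise by $(\e_J)_{(X,\lambda)} = \lambda$, which is $M_J$-linear as a map $(M_JX, \mu_X) \to (X,\lambda)$ by the module axiom~\eqref{eq: module} and natural in $(X,\lambda)$ by definition of $M_J$-linearity. To check $\e$ is a modification, one evaluates both sides of the modification square for $u\colon J \to K$ at $(X,\lambda)$: the $F_M U_M$ side contributes a factor $(\gamma^M_u)_X$ which cancels the $(\gamma^M_u)^{-1}_X$ appearing in the module structure of $\overline{u^*}(X,\lambda)$, so both sides equal $u^*\lambda$. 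The triangle identities reduce levelwise to those of the classical Eilenberg--Moore adjunction of Example~\ref{exam: Eilenberg-Moore}, and so hold as identities of modifications.

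The main bookkeeping hurdle, and essentially the only nonroutine check, is the $M_J$-linearity of $(\gamma^M_u)_X$ witnessing $F_M$ as a pseudonatural transformation; the rest is an organized repackaging of Lemma~\ref{lem: lift to monadic categories}, Proposition~\ref{prop: 2-functor sending a monadic category to its modules}, and the classical Eilenberg--Moore adjunction.
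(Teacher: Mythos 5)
Your construction of the prederivator $M\Mod_{\mathbb D}$ as the composite $\mathbf{Mod}\circ\widetilde{\mathbb D}$, the observation that $U_M$ is strict, and the choice $\gamma^{F_M}_u:=\gamma^M_u$ with the $M_J$-linearity check all coincide with the paper's proof (the paper's linearity verification uses the modification axiom~\eqref{eq: modification} for $\mu$ \emph{together with} the decomposition of $\gamma^{M^2}_u$ as a horizontal composite, which you elide slightly, and the coherence axioms for $F_M$ need only~\eqref{eq: 1-coherence} and~\eqref{eq: 2-coherence} for $M$, not the modification axiom for $\mathbb S$ --- but these are cosmetic). Where you genuinely diverge is the adjunction $F_M\dashv U_M$: the paper does not construct the unit and counit by hand but instead identifies prederivators with modules over the $2$-monad of Example~\ref{exam: 2-functors as algebras over a 2-monad} and invokes doctrinal adjunction (Theorem~\ref{thm: doctrinal left adjoint}) to produce the adjunction in the $2$-category of prederivators and lax natural transformations, then restricts to the $2$-full subcategory $\mathbf{PDer}$. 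Your explicit route --- taking $\mathbb S$ as unit, $(\e_J)_{(X,\lambda)}=\lambda$ as counit, and checking the modification square by the cancellation $u^*\lambda\circ(\gamma^M_u)^{-1}_X\circ(\gamma^M_u)_X=u^*\lambda$ --- is correct and arguably more transparent, since it does not lean on the somewhat delicate identification in Example~\ref{exam: 2-functors as algebras over a 2-monad}; what the paper's abstract argument buys is uniformity, as the same doctrinal-adjunction machinery is reused for Corollaries~\ref{cor: monadic left adjoint} and~\ref{cor: monadic right adjoint} and for the existence of Kan extensions later on.
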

\begin{proof}
For the first part of the proof, note that by the previous lemma, we can factor $\mathbb D$ as
\begin{equation*}
\begin{tikzcd}
& \text{Mon}_{\text{lax}}(\mathbf{CAT}) \ar[d, "U"] \\
\mathbf{Dia}^{\text{op}} \ar[ru, dotted, "\widetilde{\mathbb D}"] \ar[r,"\mathbb D"'] & \mathbf{CAT}
\end{tikzcd}
\end{equation*}
and the result follows by postcomposing $\widetilde{\mathbb D}$ with the $2$-functor of Proposition~\ref{prop: 2-functor sending a monadic category to its modules}.

For the second part of the proof, let us write $\overline{u^*}=\mathbf{Mod}(u^*, (\gamma^M_u)^{-1})$ for a functor $u:J \to K$ in $\mathbf{Dia}$. Given such $u$, the diagram:
\begin{equation*}
\begin{tikzcd}
\mathbb D(J) & \mathbb D(K) \lar["u^*"] \\
M_J\Mod_{\mathbb D(J)} \uar["U_{M,J}"] & M_K\Mod_{\mathbb D(K)} \lar["\overline{u^*}"] \uar["U_{M,K}"']
\end{tikzcd}
\end{equation*}
commutes, which shows that $U_M$ is an honest $2$-natural transformation (instead of just pseudonatural).
We now want to define a natural isomorphism $\gamma^{F_M}_u$ populating the square:
\begin{equation*}
\begin{tikzcd}
M_J\Mod_{\mathbb D(J)}  & M_K\Mod_{\mathbb D(K)} \lar["\overline{u^*}"'] \dlar[phantom, sloped, "\xLeftarrow{\gamma^{F_M}_u}"] \\
\mathbb D(J) \uar["F_{M,J}"] & \mathbb D(K) \lar["u^*"] \uar["F_{M,K}"']
\end{tikzcd}
\end{equation*}
Given $X \in \mathbb D(K)$, we note that the $M_J$-module $\overline{u^*}F_{M,K}X$ has underlying object $u^*M_KX$ with action
\begin{equation*}
\lambda:M_Ju^*M_KX \xrightarrow{(\gamma^M_u)^{-1}_{M_KX}} u^*M_K^2X \xrightarrow{u^*(\mu_K)_X} u^*M_KX
\end{equation*}
while $F_{M,J}u^*X$ has underlying object $M_Ju^*X$ with action
\begin{equation*}
(\mu_J)_{u^*X}:M_J^2u^*X \xrightarrow{} M_Ju^*X
\end{equation*} 
Thus, taking $\gamma^{F_M}_u:=\gamma^M_u$, we need to show that $(\gamma^M_u)_X$ is $M_J$-linear. That is, we need to show that the following diagram commutes:
\begin{equation*}
\begin{tikzcd}
M_Ju^*M_KX \ar[rr, bend left,"\lambda"] \dar["M_J (\gamma^M_u)_X"'] &[+25pt] u^*M_K^2X \rar["u^*(\mu_K)_X"]  \dlar["(\gamma^{M^2}_u)_X"] \lar["(\gamma^M_u)_{M_KX}"',"\cong"] &[+25pt] u^*M_KX \dar["(\gamma^M_u)_X"] \\
M_J^2u^*X \ar[rr, "(\mu_J)_{u^*X}"'] &&  M_Ju^*X
\end{tikzcd}
\end{equation*}
But the triangle on the left commutes by the very definition of horizontal composite of pseudonatural transformations while the right square commutes because of diagram~\eqref{eq: modification} for $\mu$. The coherence axioms now follow directly from the fact that $\gamma^M_u$ satisfies them, so $F_M$ becomes a morphism of prederivators with structure maps those of $M$.

From the definitions, it is clear that $M=U_MF_M$. Hence, to finish, we need to show that the levelwise counits and units of the Eilenberg-Moore adjunctions assemble to modifications between the corresponding prederivators. Since prederivators are $2$-functors and these are modules over a certain $2$-monad (see Example~\ref{exam: 2-functors as algebras over a 2-monad}) it follows by Theorem~\ref{thm: doctrinal left adjoint} that $F_M$ is left adjoint to $U_M$ in the $2$-category of prederivators, \textit{lax} natural transformations and modifications. Since $\mathbf{PDer}$ is a $2$-full subcategory of the latter containing both $F_M$ and $U_M$ the result is immediate.
\end{proof}

\section{The left derivator of levelwise modules}
\noindent
Throughout this section $M$ will be a fixed monad on a fixed prederivator $\mathbb D$.
\begin{rem}We adopt the following notational conventions: given a functor $u:J \to K$ in $\mathbf{Dia}$, we will use the notation $u^*$ to refer to its pullback with respect to $\mathbb D$ and $u_!,u_*$ for the left and right adjoints respectively of $u^*$ (when they exist). We will denote the pullback with respect to $M\Mod_{\mathbb D}$ with $\overline{u^*}$ and its left and right adjoints respectively by $\overline{u_!},\overline{u_*}$ (when they exist). A similar convention will be used for natural transformations.
\end{rem}
\begin{prop} \label{prop: first two axioms}
Assume $\mathbb D$ satisfies $Der1$ and $Der2$. Then so does $M\Mod_{\mathbb D}$.
\end{prop}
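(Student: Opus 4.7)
The plan is to verify each axiom separately, transferring it from $\mathbb D$ by exploiting two general facts about Eilenberg-Moore categories: that $\mathbf{Mod}$ preserves products, and that each forgetful functor $U_M$ is conservative.

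For Der1, the key points are: (a) the 2-functor $\mathbf{Mod}$ of Proposition~\ref{prop: 2-functor sending a monadic category to its modules} preserves products in the sense that, given any family of monadic categories $(\C_i, M_i)$, there is a canonical isomorphism $(\prod_i M_i) \Mod_{\prod_i \C_i} \cong \prod_i M_i \Mod_{\C_i}$, where $\prod_i M_i$ is the monad acting componentwise; this reduces to a direct componentwise check against diagrams~\eqref{eq: module} and~\eqref{eq: linear map}. (b) Since $M$ is a pseudonatural transformation, the coherence isomorphisms $\gamma^M_{\iota_k}$ at the inclusions $\iota_k: J_k \to \coprod_i J_i$ identify, under the Der1-equivalence $\mathbb D(\coprod_i J_i) \simeq \prod_i \mathbb D(J_i)$ for $\mathbb D$, the endomorphism $M_{\coprod_i J_i}$ with the product monad $\prod_i M_{J_i}$; the compatibility of $\mu$ and $\mathbb S$ (packaged in the modification axioms for $M$) ensures this identification respects monad structure. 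Combining (a) and (b) yields the required equivalence $M\Mod_{\mathbb D}(\coprod_i J_i) \simeq \prod_i M\Mod_{\mathbb D}(J_i)$. The degenerate case $\coprod_i J_i = \emptyset$ is immediate: modules over any monad on a terminal category again form a terminal category.

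For Der2, the main input is the standard conservativity of the forgetful functor $U_M: M\Mod_{\C} \to \C$ for any classical monad $M$: if an $M$-linear map $f: (X_1, \lambda_1) \to (X_2, \lambda_2)$ admits an inverse $g$ in $\C$, then a short chase around diagram~\eqref{eq: linear map} (postcomposed and precomposed with $g$) shows that $g$ is automatically $M$-linear, so $f$ is already an isomorphism in $M\Mod_{\C}$. Applied levelwise, each $U_{M,J}$ is conservative. Furthermore, the proof of Proposition~\ref{prop: prederivator} showed that $U_M: M\Mod_{\mathbb D} \to \mathbb D$ is strictly 2-natural, so $j^* U_{M,J} = U_{M,e} \overline{j^*}$ for every object $j: e \to J$. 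Given an $M_J$-linear map $f$, we then have the chain: $f$ iso $\iff$ $U_{M,J}f$ iso (conservativity of $U_{M,J}$) $\iff$ $j^* U_{M,J} f$ iso for every $j$ (Der2 for $\mathbb D$) $\iff$ $U_{M,e} \overline{j^*} f$ iso for every $j$ (strict naturality) $\iff$ $\overline{j^*} f$ iso for every $j$ (conservativity of $U_{M,e}$). This is precisely Der2 for $M\Mod_{\mathbb D}$.

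No serious obstacle is anticipated: both parts reduce to routine verifications once the above two facts about Eilenberg-Moore categories are isolated, together with the pseudonaturality of $M$ and strict 2-naturality of $U_M$, both of which are already in hand.
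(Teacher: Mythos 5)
Your proof is correct and takes essentially the same route as the paper: for $Der1$ both arguments transport the $Der1$-equivalence for $\mathbb D$ to an equivalence of monadic categories $(\mathbb D(\coprod J_i), M_{\coprod J_i}) \simeq (\prod \mathbb D(J_i), \prod M_{J_i})$ and then apply the $2$-functor $\mathbf{Mod}$ (you make the product-preservation of Eilenberg--Moore categories explicit, which the paper leaves implicit when it identifies the target with $\prod M\Mod_{\mathbb D}(J_i)$). For $Der2$ your chain of equivalences, via conservativity of the forgetful functors and the strict $2$-naturality of $U_M$ established in Proposition~\ref{prop: prederivator}, is exactly the paper's argument.
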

\begin{proof}
For $Der1$: Consider a finite family $\{ J_i \}_{i\in I}$ of categories in $\mathbf{Dia}$. Let $J= \coprod J_i$ and for each $i$, let~$j_i: J_i \to J$ the canonical inclusions, and $\pi_i: \prod \mathbb D(J_i) \to \mathbb D(J_i)$ the canonical projections. For each index~$i$, we have a strong monadic functor $(j_i^*, \gamma^M_{j_i}):(\mathbb D(J),M_J) \to (\mathbb D(J_i), M_{J_i})$.

Note that $\prod \mathbb D(J_i)$ is actually a $2$-product, meaning that for any category $\K$, the induced functor
\begin{equation*}
\textbf{CAT}\left(\K, \prod \mathbb D(J_i)\right) \to \prod \textbf{CAT}(\K, \mathbb D(J_i))
\end{equation*}
(which is component-wise post-composition with $\pi_i$) is an isomorphism of categories. Therefore, the $j_i^*$ induce a unique functor $j^*: \mathbb D(J) \to \prod \mathbb D(J_i)$ (by abuse of the * notation) 
such that $\pi_i j^*= j_i^*$. It is then easy to see that the corresponding induced functor of $\{ M_{J_i}j_i^* \}$ is exactly $(\prod M_{J_i})j^*$ and the corresponding induced functor of $\{ j_i^*M_{J} \}$ is exactly $j^* M_{J}$. Therefore by the $2$-universal property above, the $\gamma^M_{J_i}$ induce a natural isomorphism $\phi: j^* M_{J} \overset{\sim}\rightarrow (\prod M_{J_i})j^*$. Since the $\gamma^M_{J_i}$ make the diagrams~\eqref{eq: monadic functor} commute (with $M_{J_i}$ for $M$), by uniqueness of the induced natural isomorphism, $\phi$ also has to make these diagrams commute; that is, $(j^*,\phi)$ is a strong monadic functor.

By $Der1$ for $\mathbb D$, the functor $j^*$ is an equivalence. Since $(j^*, \phi)$ is a strong monadic functor, Theorem~\ref{thm: doctrinal left adjoint} guarantees this is an equivalence $(\mathbb D(J),M_J) \xrightarrow{\cong} (\prod \mathbb D(J_i), \prod M_{J_i})$ in~$\text{Mon}_{\text{lax}}(\mathbf{CAT})$. Since $2$-functors preserve equivalences, by Proposition~\ref{prop: 2-functor sending a monadic category to its modules}  this induces a further equivalence $\overline {j^*}: M\Mod_{\mathbb D}(J) \xrightarrow{\cong} \prod M\Mod_{\mathbb D}(J_i)$. Finally, $\overline {j^*}$ is actually the functor induced by $\left\{ \overline{j_i^*} \right\}$ so $M\Mod_{\mathbb D}$ satisfies $Der1$.

For $Der2$, let $J$ be any category in $\mathbf{Dia}$, and $f:X \to Y$ a morphism in $M\Mod_{\mathbb D}(J)$. Then $f$ is an isomorphism if and only if $U_{M,J}f$ is an isomorphism (where $U_{M,J}:M_J\Mod_{\mathbb D(J)} \to \mathbb D(J)$ is the forgetful functor) if and only if (by $Der2$ for $\mathbb D$) $m^*U_{M,J}f \cong U_{M,e}\overline{m^*}f$ is an isomorphism for all $m \in J$ if and only if $\overline{m^*}f$ is an isomorphism for all $m \in J$ (again here using that the forgetful functor detects isomorphisms).
\end{proof}

Assume $\mathbb D$ is a left derivator (see Definition~\ref{defn: left derivator}). For any functor $u:J \to K$ in $\mathbf{Dia}$, the mate of~$\gamma^M_u: u^*M_K \to M_J u^*$ is the natural transformation
\begin{equation*}
\gamma^M_{u,*}:M_Ku_* \to u_*M_J
\end{equation*}
given as the pasting
\begin{equation*}
\begin{tikzcd}
\mathbb D(K) & \mathbb D(J) \lar["u_*"'] & \mathbb D(J) \lar["M_J"'] \dlar[phantom, sloped, "\xRightarrow{\gamma^M_u}"] &  \\
& \mathbb D(K) \uar["u^*"] \ar[ul, bend left, "1_{\mathbb D(K)}"{name=A}]  & \mathbb D(K) \uar["u^*"'] \lar["M_K"] & \mathbb D(J) \lar["u_*"] \ular[bend right, "1_{\mathbb D(J)}"'{name=C}] \arrow[phantom, sloped, from=C, to=2-3, "\xRightarrow{\e}"] \arrow[phantom, sloped, from=A, to=1-2, "\xRightarrow{\eta}"]
\end{tikzcd}
\end{equation*}

\begin{prop} \label{prop: left}
If $\mathbb D$ is a left derivator, then the prederivator $M\Mod_{\mathbb D}$ is also a left derivator.
\end{prop}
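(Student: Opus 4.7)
The plan is to deduce the remaining axioms for a left derivator on $M\Mod_{\mathbb D}$ from the corresponding axioms on $\mathbb D$, by combining the doctrinal adjunction theorems of Section~1 with two features of the forgetful functor $U_M : M\Mod_{\mathbb D} \to \mathbb D$: that it is a strict morphism of prederivators (as shown in Proposition~\ref{prop: prederivator}) and that it reflects isomorphisms.

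Since Der1 and Der2 are already established in Proposition~\ref{prop: first two axioms}, first I would handle the axiom demanding a right adjoint $\overline{u_*}$ to each $\overline{u^*}$. Fix $u : J \to K$ in $\mathbf{Dia}$. By Lemma~\ref{lem: lift to monadic categories}, the pair $(u^*, (\gamma^M_u)^{-1})$ is a \emph{strong} monadic functor, and by the left-derivator hypothesis on $\mathbb D$ its underlying $u^*$ admits a right adjoint $u_*$. Corollary~\ref{cor: monadic left adjoint} then produces a right adjoint to $\overline{u^*} = \mathbf{Mod}(u^*, (\gamma^M_u)^{-1})$, which by Theorem~\ref{thm: doctrinal right adjunction} is exactly $\overline{u_*} := \mathbf{Mod}(u_*, \gamma^M_{u,*})$, with $\gamma^M_{u,*}$ the mate displayed just before the statement. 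The explicit formula from Proposition~\ref{prop: 2-functor sending a monadic category to its modules} gives $\overline{u_*}(X,\lambda) = (u_* X, \, u_*\lambda \circ (\gamma^M_{u,*})_X)$; in particular $U_M \overline{u_*} = u_* U_M$ strictly as functors.

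For the remaining base-change axiom I would reduce to the corresponding statement for $\mathbb D$ via $U_M$. The essential facts are: (a) $U_M$ commutes strictly with each $\overline{u^*}$ and each $\overline{u_*}$; (b) by 2-functoriality of $\mathbf{Mod}$ and strict 2-naturality of $U_M$, the unit and counit of $\overline{u^*} \dashv \overline{u_*}$ are underlain by those of $u^* \dashv u_*$; (c) $U_M$ reflects isomorphisms, since the inverse of an $M$-linear isomorphism is automatically $M$-linear by a short naturality argument. Given an exact square $\sigma$ in $\mathbf{Dia}$ for which $\mathbb D$ satisfies base change, (a) and (b) together imply that the mate of $\sigma^*$ in $M\Mod_{\mathbb D}$, whiskered on the right with $U_M$, coincides with the mate of $\sigma^*$ in $\mathbb D$ whiskered with $U_M$. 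The latter is invertible by hypothesis, whence by (c) the mate in $M\Mod_{\mathbb D}$ is itself invertible.

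The step I expect to require the most care is (b): verifying that the doctrinal construction of $\overline{u_*}$ and its unit/counit is genuinely compatible with $U_M$, so that the mate built from $\overline{u^*} \dashv \overline{u_*}$ does descend to the mate built from $u^* \dashv u_*$. This is not conceptually deep---it amounts to tracking that the 2-functor $\mathbf{Mod}$, applied to the lift of $u^* \dashv u_*$ in $\text{Mon}_{\text{lax}}(\mathbf{CAT})$ furnished by Theorem~\ref{thm: doctrinal right adjunction}, produces the correct underlying adjunction---but it does require bookkeeping of the 2-cells. Once settled, the proof is a clean application of the machinery of Section~1 together with the conservativity of $U_M$.
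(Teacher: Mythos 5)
Your proposal is correct and follows essentially the same route as the paper: Der3L is obtained by applying Corollary~\ref{cor: monadic left adjoint} (doctrinal adjunction) to the strong monadic functor $(u^*,(\gamma^M_u)^{-1})$, and Der4L is checked by observing that the forgetful functor $U_M$ intertwines the mate in $M\Mod_{\mathbb D}$ with the mate in $\mathbb D$ and reflects isomorphisms. Your points (a)--(c) merely spell out the compatibility that the paper's proof records as the one-line identity $U_{M,e}((\overline{\a_*})_{(X,\lambda)})=(\a_*)_X$.
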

\begin{proof}
Let us fix a functor $u:J \to K$ in $\mathbf{Dia}$. By~\eqref{eq: modification} for $\mu$ and $\mathbb S$, $(u^*, \gamma^M_u)$ is a strong monadic functor. Since $u^*$ has a right adjoint $u_*$, by Corollary~\ref{cor: monadic left adjoint} we get an induced adjunction $\overline{u^*}: M\Mod_{\mathbb D}(K) \leftrightarrows M\Mod_{\mathbb D}(J): \overline{u_*}$ proving $Der3L$ for $M\Mod_{\mathbb D}$.

\noindent
For $Der4L$, consider any functor $u:J \to K$ in $\mathbf{Dia}$, and let $k \in K$. We have a diagram in $\mathbf{Cat}$:
\begin{equation*}
\begin{tikzcd}
J_{k/} \rar["pr"] \dar["\pi"]  &
J \dar["u"] \ar[ld, phantom, sloped, "\xRightarrow{\a}"] \\
e \rar["k"] &
K
\end{tikzcd}
\end{equation*}

\noindent
By assumption the mate $\a_*:k^*u_* \to \pi_* pr^*$ of $\a^*$ given as the pasting
\begin{equation*}
\begin{tikzcd}
\mathbb D(e) &
\mathbb D(J_{k/}) \lar["\pi_*"']  &
\mathbb D(J) \lar["pr^*"']  \ar[ld, phantom, sloped, "\xRightarrow{\a^*}"] & \\
&
\mathbb D(e) \ar[lu, bend left, ""'{name=A}, "1_{\mathbb D(e)}"]  \uar["\pi^*"']  &
\mathbb D(K) \uar["u^*"] \lar["k^*"] &
\mathbb D(J) \lar["u_*"] \ar[ul, bend right, "1_{\mathbb D(J)}"', ""{name=B}]
\ar[phantom, sloped, from=A, to=1-2, "\xRightarrow{\eta}"] \ar[phantom, sloped, from=2-3, to=B, "\xRightarrow{\e}"]
\end{tikzcd}
\end{equation*}
is an isomorphism. Given $(X, \lambda) \in M\Mod_{\mathbb D}(J)$ we clearly have $U_{M,e}((\overline{\a_*})_{(X, \lambda)})=(\a_{*})_{X}$. Since the forgetful functor detects isomorphisms, we are done.
\end{proof}

\begin{rem}
	Dually, given a comonad $M$ on a right derivator $\mathbb D$, the levelwise comodules over $M$ form a right derivator.
\end{rem}

\section{Left Kan Extensions}
Throughout this section, let $M$ be a fixed monad on a prederivator $\mathbb D$. For any functor $u:J \to K$ in~$\mathbf{Dia}$ such that $\mathbb D$ admits left Kan extensions along $u$, we have a canonical mate:
\begin{equation*}
\gamma^M_{u,!}:u_!  M_J \to M_Ku_!
\end{equation*}
given as the pasting: 
\begin{equation} \label{eq: gammalowershriek}
\begin{tikzcd}
\mathbb D(K)  & \mathbb D(J) \lar["u_!"'] & \mathbb D(J) \lar["M_J"'] \dlar[phantom, sloped, "\xLeftarrow{(\gamma^M_u)^{-1}}"] & \arrow[phantom, sloped, from=1-2, to=A,"\xLeftarrow{\e}"] \\
& \ular[bend left, "1_{\mathbb D(K)}"{name=A}] \mathbb D(K) \uar["u^*"]  & \mathbb D(K) \uar["u^*"'] \lar["M_K"] & \mathbb D(J) \lar["u_!"] \ular[ bend right, "1_{\mathbb D(J)}"'{name=C}] \arrow[phantom, sloped, from=C, to=2-3, "\xLeftarrow{\eta}"]
\end{tikzcd}
\end{equation}
\begin{defn} \label{defn: cocontinuous}
	If this mate is an isomorphism, then we say $M$ \textbf{commutes with left Kan extensions along}~$\mathbf{u}$. If this happens for any functor $u$ in $\mathbf{Dia}$ then we say $M$ is cocontinuous.
\end{defn}
\begin{prop}
Let $u:J \to K$ be a functor in $\mathbf{Dia}$ such that $\mathbb D$ admits left Kan extensions along $u$. If~$M$ commutes with left Kan extensions along $u$ then $M\Mod_{\mathbb D}$ admits left Kan extensions along $u$.
\end{prop}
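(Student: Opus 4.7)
The strategy is to deduce the existence of a left adjoint $\overline{u_!}$ to $\overline{u^*}$ directly from Corollary~\ref{cor: monadic right adjoint}, which was tailor-made for this kind of situation. By Lemma~\ref{lem: lift to monadic categories}, the pair $(u^*, (\gamma^M_u)^{-1})$ is a (strong) monadic functor $(\mathbb D(K), M_K) \to (\mathbb D(J), M_J)$; in particular, viewing $(\gamma^M_u)^{-1} \colon M_J u^* \Rightarrow u^* M_K$ as the lax structure $\psi$, this fits the hypothesis of Corollary~\ref{cor: monadic right adjoint} with $G = u^*$, $M = M_J$, $M' = M_K$.

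By assumption, $\mathbb D$ admits left Kan extensions along $u$, so $u^*$ has a left adjoint $u_!$ in $\mathbf{CAT}$, providing the $F$ needed in the corollary. It remains only to identify the mate $\phi'$ of $\psi = (\gamma^M_u)^{-1}$ along the adjunction $u_! \dashv u^*$ with the transformation $\gamma^M_{u,!} \colon u_! M_J \to M_K u_!$ defined in~\eqref{eq: gammalowershriek}. But the pasting prescribed by Corollary~\ref{cor: monadic right adjoint} is built out of exactly the same ingredients as~\eqref{eq: gammalowershriek} (the unit $\eta$, the counit $\varepsilon$, and $(\gamma^M_u)^{-1}$, arranged in the same order around the adjunction $u_! \dashv u^*$); comparing the two diagrams gives $\phi' = \gamma^M_{u,!}$.

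By the hypothesis that $M$ commutes with left Kan extensions along $u$, the transformation $\gamma^M_{u,!}$ is invertible, so $\phi'$ is invertible. Corollary~\ref{cor: monadic right adjoint} then produces a left adjoint to $\mathbf{Mod}(u^*, (\gamma^M_u)^{-1}) = \overline{u^*}$ in $\mathbf{CAT}$, i.e., a functor $\overline{u_!} \colon M\Mod_{\mathbb D}(J) \to M\Mod_{\mathbb D}(K)$ left adjoint to $\overline{u^*}$. This is precisely the statement that $M\Mod_{\mathbb D}$ admits left Kan extensions along $u$.

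The only substantive step is the mate comparison $\phi' = \gamma^M_{u,!}$, which I expect to be a routine diagram chase rather than a genuine obstacle; everything else is a direct invocation of results already established in the paper.
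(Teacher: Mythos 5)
Your proposal is correct and follows essentially the same route as the paper: view $(u^*,(\gamma^M_u)^{-1})$ as a lax monadic functor, observe that the invertibility hypothesis on $\gamma^M_{u,!}$ is exactly the mate condition of Corollary~\ref{cor: monadic right adjoint}, and apply the $2$-functor $\mathbf{Mod}$. The mate identification you flag as the "only substantive step" is indeed immediate, since the pasting in the corollary and the one defining $\gamma^M_{u,!}$ in~\eqref{eq: gammalowershriek} are literally the same diagram.
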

\begin{proof}
We have a lax monadic functor $U=(u^*, (\gamma^M_u)^{-1}): (\mathbb D(K),M_K) \to (\mathbb D(J), M_J)$. By Corollary~\ref{cor: monadic right adjoint},~$M$ commutes with left Kan extensions along $u$ if and only if $U$ has a lax adjoint, which is then necessarily strong. Applying the $2$-functor of Proposition~\ref{prop: 2-functor sending a monadic category to its modules} the result follows immediately.
\end{proof}


\begin{cor}
	Let $\mathbb D$ be a right derivator. If $M$ is cocontinuous, then $M\Mod_{\mathbb D}$ is also a right derivator and the right adjoint of the Eilenberg-Moore adjunction (see Proposition~\ref{prop: prederivator}) is cocontinuous.
\end{cor}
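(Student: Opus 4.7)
The plan is to verify the right-derivator axioms for $M\Mod_{\mathbb D}$, dualizing the strategy of Proposition~\ref{prop: left} and combining it with the preceding proposition on left Kan extensions. Axioms $Der1$ and $Der2$ are already handled by Proposition~\ref{prop: first two axioms}, which only requires that $\mathbb D$ satisfy them. Since $\mathbb D$ is a right derivator, $u^*$ admits a left adjoint $u_!$ for every $u:J\to K$ in $\mathbf{Dia}$, and cocontinuity of $M$ together with the previous proposition furnishes a left adjoint $\overline{u_!}$ to $\overline{u^*}$ in $M\Mod_{\mathbb D}$, yielding $Der3R$.

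For $Der4R$, fix $u:J\to K$ and $k\in K$, and consider the comma square
\begin{equation*}
\begin{tikzcd}
J_{/k} \rar["pr"] \dar["\pi"'] & J \dar["u"] \ar[ld, phantom, sloped, "\xRightarrow{\a}"] \\
e \rar["k"'] & K
\end{tikzcd}
\end{equation*}
The mate $\a_!:\pi_! pr^* \to k^* u_!$ is an isomorphism in $\mathbb D$ by hypothesis. The crucial observation is that $\overline{u_!}$, built via Corollary~\ref{cor: monadic right adjoint} and then Proposition~\ref{prop: 2-functor sending a monadic category to its modules}, has underlying functor $u_!$: explicitly, $\mathbf{Mod}(u_!, \overline{u_!}')$ sends $(X,\lambda)$ to $(u_! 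X,\, u_! \lambda \circ \overline{u_!}'_X)$, so $U_{M,K}\overline{u_!}=u_! U_{M,J}$ on the nose, and similarly for $\overline{\pi_!}$. It follows that applying $U_{M,e}$ to the mate $\overline{\a_!}:\overline{\pi_!}\,\overline{pr^*}\to \overline{k^*}\,\overline{u_!}$ recovers $\a_!$ pointwise in $\mathbb D$. Since the forgetful functor $U_{M,e}$ detects isomorphisms, $\overline{\a_!}$ is itself an isomorphism, proving $Der4R$.

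Finally, cocontinuity of $U_M$ asks that the canonical mate $u_! U_{M,J}\to U_{M,K}\overline{u_!}$ be an isomorphism for every $u$; the identity $U_{M,K}\overline{u_!}=u_! U_{M,J}$ established above, together with the strictness of $U_M$, makes this mate the identity. The main bookkeeping rather than conceptual obstacle is ensuring that the units and counits of $\overline{u_!}\dashv \overline{u^*}$, produced by the $2$-functoriality of $\mathbf{Mod}$ applied to the lax-monadic adjunction of Theorem~\ref{thm: doctrinal left adjoint}, are sent by $U_M$ to those of $u_!\dashv u^*$; this is forced by the very definition of $\mathbf{Mod}$ on $2$-cells in Proposition~\ref{prop: 2-functor sending a monadic category to its modules}, so that the mate $\overline{\a_!}$ really does descend to $\a_!$ under the forgetful functor, closing both parts of the argument.
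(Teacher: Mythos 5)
Your proposal is correct and follows essentially the same route as the paper: $Der1$/$Der2$ from Proposition~\ref{prop: first two axioms}, $Der3R$ from the preceding proposition via cocontinuity of $M$, $Der4R$ by pushing the mate through the isomorphism-detecting forgetful functor (dual to Proposition~\ref{prop: left}), and cocontinuity of $U_M$ by reducing the canonical mate to a triangle identity using that $U_M$ is strict and that the units and counits of $\overline{u_!}\dashv\overline{u^*}$ have the same components as those of $u_!\dashv u^*$. The paper's own proof is just a more compressed version of exactly this argument.
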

\begin{proof}
	The last proposition guarantees $M\Mod_{\mathbb D}$ has all left Kan extensions, while Proposition~\ref{prop: first two axioms} guarantees it satisfies $Der1$ and $Der2$. The verification of $Der4L$ follows exactly as in Proposition~\ref{prop: left}.
	
	To finish the proof it remains to show that $U_M$ is cocontinuous: let $u:J \to K$ a functor in $\mathbf{Dia}$. Let us write $\e$ for the counit of the adjunction $u_! \dashv u^*$ and $\overline \eta$ for the unit of the adjunction $\overline{u_!} \dashv \overline{u^*}$.  We have to show that the top arrow in the following commutative diagram is an isomorphism:
	\[
	\begin{tikzcd}
	u_! U_{M,J} \ar[r] \ar[d,"u_! U_{M,J} \overline{\eta}"'] & U_{M,K} \overline{u_!} \\
	u_!U_{M,J}\overline{u^*}\overline{u_!} \ar[r, equals] & u_!u^*U_{M,K}\overline{u_!} \ar[u,"\e U_{M,K}\overline{u_!}"'] 
	\end{tikzcd}
	\]
	But applied to an object $(X,\lambda) \in M_J\Mod_{\mathbb D(J)}$ this diagram becomes
	\[
	\begin{tikzcd}
	u_!X \ar[d, "u_! \eta"'] \ar[r] & u_!X \\
	u_!u^*u_!X \ar[r, "1"'] & u_!u^*u_!X \ar[u,"\e u_!"]
	\end{tikzcd}
	\]
	so the top arrow is the identity by the triangle identities, finishing the proof.
\end{proof}

\begin{rem}
	Of course $F_M$ is also cocontinuous as a left adjoint morphism of derivators. So while Proposition~\ref{prop: left} says we can do levelwise modules in the $1$- and $2$-full subcategory of $\PDer$ spanned by left derivators, the above proposition says that for right derivators we have to restrict further to \textbf{cocontinuous} morphisms as $1$-cells for everything to work. A notable exception is when the monad in question is idempotent (see~\cite[Lemma 4.2]{Cisinski}).
\end{rem}

\begin{cor} \label{cor: main}
Assume $\mathbb D$ is a derivator, and $M$ a cocontinuous monad on $\mathbb D$. Then the prederivator $M\Mod_{\mathbb D}$ is also a derivator and the forgetful functor $U_M: M\Mod_{\mathbb D} \to \mathbb D$ of Proposition~\ref{prop: prederivator} is cocontinuous.
\end{cor}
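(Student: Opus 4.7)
The plan is to observe that this corollary is essentially a packaging of the two previous results, so the proof proposal is to invoke them in sequence. Recall that a derivator is by definition a prederivator which is simultaneously a left derivator and a right derivator, i.e.\ satisfies Der1, Der2, Der3L, Der3R, Der4L, and Der4R. Since $\mathbb D$ is a derivator, it is in particular a left derivator, so \propref{prop: left} applies to give that $M\Mod_{\mathbb D}$ satisfies Der1, Der2, Der3L, and Der4L.

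For the right-sided axioms and the cocontinuity of $U_M$, I would invoke the immediately preceding corollary: $\mathbb D$ is also a right derivator, and $M$ is cocontinuous by hypothesis, so that corollary yields that $M\Mod_{\mathbb D}$ satisfies Der3R and Der4R (and again Der1 and Der2, consistently), together with the cocontinuity of the forgetful morphism $U_M:M\Mod_{\mathbb D} \to \mathbb D$. Combining the two, $M\Mod_{\mathbb D}$ satisfies all six axioms of a derivator, and $U_M$ is cocontinuous.

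There is no real obstacle to worry about because all the technical work—detecting isomorphisms through the forgetful functor, lifting Kan extensions via Corollaries~\ref{cor: monadic left adjoint} and~\ref{cor: monadic right adjoint}, and verifying base change pointwise after forgetting—has already been carried out in \propref{prop: left} and the preceding corollary. The only minor point to mention is that Der1 and Der2 are established twice (once via each side), but of course this is harmless: they depend only on the underlying prederivator $M\Mod_{\mathbb D}$, which is unambiguously defined via \propref{prop: prederivator}. Hence the proof reduces to a one-line assembly of the two preceding results.
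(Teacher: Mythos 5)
Your proposal is correct and is exactly the argument the paper intends (the paper leaves the proof implicit): combine Proposition~\ref{prop: left} for the left-derivator axioms with the immediately preceding corollary for the right-derivator axioms and the cocontinuity of $U_M$. Nothing further is needed.
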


\begin{rem} \label{rem: exact morphism}
	Recall that a morphism of derivators $F: \mathbb D \to \mathbb E$ is called \textbf{right exact} if it preserves initial objects and cocartesian squares (see Definition~\ref{defn: cocartesian}). When~$\mathbf{Dia}=\mathbf{Cat}$, any right exact monad on $\mathbb D$ that preserves coproducts is automatically cocontinuous (see \cite[Theorem 7.13]{linearity}).
\end{rem}
\section{Stability}
For this section, assume $\mathbb D$ is a derivator and $M$ cocontinuous, so that by Corollary~\ref{cor: main} above $M\Mod_{\mathbb D}$ is a derivator. As noted in Remark~\ref{rem: exact morphism} above, it is enough to assume~$M$ is right exact and commutes with coproducts if $\mathbf{Dia}=\mathbf{Cat}$.

By $\square$ we will denote the following category
\begin{equation*}
\begin{tikzcd}
(0,0) \rar[""] \dar[""] & (1,0) \dar[""] \\
(0,1) \rar[""] & (1,1)
\end{tikzcd}
\end{equation*}
and we will use the notations $\ulcorner, \lrcorner$ for the top left and bottom right corners respectively (\ie the full subcategories missing (1,1) or (0,0)). The corresponding inclusions will be denoted by $i_{\ulcorner}$ and $i_{\lrcorner}$. An object $X \in \mathbb D(\square)$ will be called a \textbf{(coherent) square}.
\begin{defn} \label{defn: cocartesian}
A square $X \in \mathbb D(\square)$ is called \textbf{cocartesian} if and only if it lies in the essential image of~${i_{\ulcorner}}_!:\mathbb D(\ulcorner)\to\mathbb D(\square)$. It is \textbf{cartesian} if and only if it lies in the essential image of ${i_{\lrcorner}}_*:\mathbb D(\lrcorner)\to\mathbb D(\square)$.
\end{defn}
Note that $i_{\ulcorner}, i_{\lrcorner}$ are fully faithful, and because homotopy left or right Kan extensions along fully faithful functors are also fully faithful (see~\cite[Proposition 1.20]{Groth}), we see that $X \in \mathbb D(\square)$ is cocartesian if and only if the counit $\e_X: {i_{\ulcorner}}_! {i_{\ulcorner}}^* X \to X$ is an isomorphism. Dually, $X$ is cartesian if and only if the unit~$\eta_X:X \to {i_{\lrcorner}}^* {i_{\lrcorner}}_*X$ is an isomorphism.
\begin{defn} \label{defn: stable}
A derivator $\mathbb D$ is \textbf{stable} if it is pointed (\ie the category $\mathbb D(J)$ is pointed for any small category $J$) and a square in $\mathbb D(\square)$ is cocartesian if and only if it is cartesian.
\end{defn}
\begin{prop} \label{prop: stable}
If $\mathbb D$ is a stable derivator, and $M$ is a cocontinuous monad on $\mathbb D$, then $M\Mod_{\mathbb D}$ is also stable. 
\end{prop}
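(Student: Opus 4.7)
The plan is to reduce each condition in Definition~\ref{defn: stable} (pointedness, and the coincidence of cocartesian and cartesian squares) to the corresponding property of $\mathbb D$, via the forgetful morphism $U_M:M\Mod_{\mathbb D}\to\mathbb D$. The two features of $U_M$ I will rely on are that it is both cocontinuous (Corollary~\ref{cor: main}) and continuous, and that it is conservative at every level (a property of any Eilenberg--Moore forgetful functor).

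For pointedness, fix $J\in\mathbf{Dia}$ and let $0\in\mathbb D(J)$ be a zero object. Since $M$ is cocontinuous, $M_J$ preserves initial objects, so $M_J 0\cong 0$; hence the unique morphism $M_J 0\to 0$ makes $0$ into an $M_J$-module. This module is initial because the left adjoint $F_{M,J}$ preserves initial objects and the module in question is $F_{M,J}(0)$; it is terminal because any morphism into $0$ in $\mathbb D(J)$ is unique and hence vacuously $M_J$-linear.

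For the continuity of $U_M$, inspection of the proof of Proposition~\ref{prop: left} shows that $\overline{u_*}$ is produced via Theorem~\ref{thm: doctrinal right adjunction}, which equips the existing right adjoint $u_*$ with a lax module structure; since the $2$-functor $\mathbf{Mod}$ of Proposition~\ref{prop: 2-functor sending a monadic category to its modules} acts as the identity on underlying functors and morphisms, one gets $U_{M,K}\overline{u_*}=u_*U_{M,J}$ strictly, so $U_M$ is continuous. The heart of the argument is then to show that a coherent square $X\in M\Mod_{\mathbb D}(\square)$ is cocartesian (resp.~cartesian) if and only if $U_{M,\square}X$ is cocartesian (resp.~cartesian) in $\mathbb D(\square)$. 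For cocartesianness, $X$ is cocartesian iff the counit $\e_X:(i_{\ulcorner})_!(i_{\ulcorner})^*X\to X$ is invertible; by conservativity of $U_{M,\square}$ this is equivalent to $U_{M,\square}\e_X$ being invertible, and using that $U_M$ is strict and cocontinuous one identifies $U_{M,\square}\e_X$ with $\e_{U_{M,\square}X}$ under the canonical iso $U_{M,\square}(i_{\ulcorner})_!\cong (i_{\ulcorner})_!U_{M,\ulcorner}$. The cartesian case is dual, using continuity in place of cocontinuity.

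Combining these reductions with stability of $\mathbb D$ closes the argument: for $X\in M\Mod_{\mathbb D}(\square)$, $X$ is cocartesian iff $U_{M,\square}X$ is cocartesian iff $U_{M,\square}X$ is cartesian iff $X$ is cartesian. The main obstacle I anticipate is precisely the identification of $U_{M,\square}\e_X$ with $\e_{U_{M,\square}X}$ modulo the cocontinuity iso; this is a standard compatibility between a strict morphism of prederivators and the adjunction counits it preserves, ultimately reducing to the triangle identities for $(i_{\ulcorner})_!\dashv (i_{\ulcorner})^*$, but it is the one place in the proof that requires careful bookkeeping.
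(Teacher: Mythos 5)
Your proof is correct and follows essentially the same route as the paper's: reduce both pointedness and the (co)cartesianness of squares to $\mathbb D$ via the levelwise forgetful functors, using their conservativity together with the identification $U_M(\overline{\e}_{(X,\lambda)})=\e_X$ of the counits (and dually for units). The paper states this identification directly and treats the cartesian case by duality, exactly as you do; your extra detail on the zero object and on the strict commutation $U_{M,K}\overline{u_*}=u_*U_{M,J}$ is sound but not a different argument.
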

\begin{proof}
Since $M$ is cocontinuous, $M\Mod_{\mathbb D}$ is a derivator. Furthermore, $M$ is in particular pointed and hence, $M\Mod_{\mathbb D}$ is also pointed. Let us write $\e$ for the counit of the adjunction ${i_{\ulcorner}}_! \dashv i_{\ulcorner}^*$, and $\overline{\e}$ for that of $\overline{{i_{\ulcorner}}_!} \dashv \overline{i_{\ulcorner}^*}$. Let $(X ,\lambda) \in M\Mod_{\mathbb D}( \square)$. Note that $U_M(\overline{\e}_{(X, \lambda)})=\e_X$ so $(X, \lambda)$ is cocartesian if and only if~$X \in \mathbb D( \square)$ is. Similarly $(X, \lambda)$ is cartesian if and only if $X$ is. The result follows immediately.
\end{proof}
\begin{rem} \label{rem: non-strong stable derivators}
	Proposition~\ref{prop: stable} allows us to produce examples of derivators that are stable but not strong. Indeed, let $\mathbb D$ be a stable strong monoidal derivator, and $A$ any monoid in~$\mathbb D(e)$; then~$A\Mod_{\mathbb D}$ is a stable derivator. If it is also strong, then its underlying category $A\Mod_{\mathbb D(e)}$ has to be triangulated. Furthermore, as in the proof of Corollary~\ref{cor: triangulated}, the morphism of derivators~$U:A\Mod_{\mathbb D} \to \mathbb D$ has to be exact. But then~\cite[Proposition 4.18]{Groth} guarantees that the forgetful functor~$U_e:A\Mod_{\mathbb D(e)} \to \mathbb D(e)$ can be (canonically) endowed with the structure of an exact functor of triangulated categories. In short, if the stable derivator $A\Mod_{\mathbb D}$ is also strong, then its underlying category~$A\Mod_{\mathbb D(e)}$ has to admit a triangulation so that the forgetful functor $A\Mod_{\mathbb D(e)} \to \mathbb D(e)$ is exact. Hence, the recipe for producing non-strong stable derivators is: take any tensor triangulated category $\C$ underlying a strong, stable, monoidal derivator $\mathbb D$, and any monoid object $A$ on the base such that its modules do not admit a triangulated structure compatible with that of $\C$. Then $A\Mod_{\mathbb D}$ is an example of a non-strong stable derivator. Such examples abound in nature but we give an explicit one below.
\end{rem}

\begin{exam} \label{exam: modules not compatible with triangulation}
	Let $k$ be a field, and let $A$ be any $k$-algebra which admits non-projective modules. Considering~$A$ as a complex concentrated in degree $0$, we get a monoid in $\mathbb D(k)$. Assume $\T:=A\Mod_{\mathbb D(k)}$ admits a triangulation such that the forgetful functor $\T \to D(k)$ is exact. Then for any $X \in \T$, the counit~$\e_X$ admits a section (see the proof of~\cite[Proposition 2.10]{Balmer}). Since $A$-modules are equivalent to graded $k$-vector spaces with a compatible action of~$A$, this implies that any $A$-module is projective, a contradiction.
\end{exam}
\section{Separable monads and strongness}
\begin{defn}
A monad $M: \mathbb D \to \mathbb D$ in the $2$-category $\PDer$ is \textbf{separable} if $\mu:M^2 \to M$ admits a section $\sigma:M \to M^2$ satisfying $M \mu \circ \sigma M= \sigma \circ \mu= \mu M \circ M \sigma$.
\end{defn}
\begin{lem} \label{lem: section}
Let $M: \mathbb D \to \mathbb D$ a monad on a prederivator $\mathbb D$. Then $M$ is separable if and only if the counit of the Eilenberg-Moore adjunction admits a section.
\end{lem}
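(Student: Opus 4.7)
The plan is to run the classical Eilenberg--Moore argument levelwise and observe that the pointwise data assemble automatically into modifications in $\PDer$. By Proposition~\ref{prop: prederivator}, the counit $\varepsilon:F_MU_M\to 1$ evaluated at $(X,\lambda)\in M_J\Mod_{\mathbb D(J)}$ is the $M_J$-linear map $\lambda:(M_JX,\mu_{J,X})\to(X,\lambda)$. A modification section of $\varepsilon$ is therefore an assignment $(X,\lambda)\mapsto\rho_{(X,\lambda)}$ of an $M_J$-linear map $\rho_{(X,\lambda)}:X\to M_JX$ with $\lambda\circ\rho_{(X,\lambda)}=1_X$, natural in $(X,\lambda)$ and coherent with the pseudonatural structure of $M$ along functors $u:J\to K$ in $\mathbf{Dia}$.

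For the direction ``$M$ separable $\Rightarrow$ $\varepsilon$ has a section'', given the separability datum $\sigma:M\to M^2$ I would set
\[
\rho_{(X,\lambda)}\;:=\;M_J\lambda\circ\sigma_{J,X}\circ\mathbb S_{J,X}\,:\,X\longrightarrow M_JX.
\]
The section identity $\lambda\circ\rho_{(X,\lambda)}=1_X$ telescopes using the module axiom $\lambda\circ M_J\lambda=\lambda\circ\mu_{J,X}$, the separability relation $\mu\sigma=1_M$, and the module unit $\lambda\circ\mathbb S_{J,X}=1_X$. The $M_J$-linearity identity $\mu_{J,X}\circ M_J\rho_{(X,\lambda)}=\rho_{(X,\lambda)}\circ\lambda$ is the step with real content: both sides should reduce to $M_J\lambda\circ\sigma_{J,X}$ after a short chase using naturality of $\mathbb S$ and of $\sigma$, the module axioms for $(X,\lambda)$, the two remaining separability coherences $M\mu\circ\sigma M=\sigma\mu=\mu M\circ M\sigma$, and the unit identities $\mu\circ M\mathbb S=\mu\circ\mathbb S M=1_M$. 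Because $\mathbb S$, $\sigma$ and the structure maps of $M$ are modifications in $\PDer$, the assignment $(X,\lambda)\mapsto\rho_{(X,\lambda)}$ automatically defines a modification.

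Conversely, given a modification section $\rho$ of $\varepsilon$, I would define
\[
\sigma_{J,Y}\;:=\;\rho_{(M_JY,\mu_{J,Y})}\,:\,M_JY\longrightarrow M_J^2Y,
\]
i.e.\ the value of $\rho$ at the free module $F_{M,J}Y$; this inherits its modification structure from $\rho$ and $F_M$. The identity $\mu\sigma=1_M$ is precisely the section condition at free modules, since $\varepsilon_{F_{M,J}Y}=\mu_{J,Y}$. The coherence $\mu M\circ M\sigma=\sigma\mu$ is exactly the $M_J$-linearity of $\rho_{(M_JY,\mu_{J,Y})}:(M_JY,\mu_{J,Y})\to(M_J^2Y,\mu_{J,M_JY})$, while the remaining coherence $M\mu\circ\sigma M=\sigma\mu$ drops out of naturality of $\rho$ applied to the $M_J$-linear map $\mu_{J,Y}:(M_J^2Y,\mu_{J,M_JY})\to(M_JY,\mu_{J,Y})$.

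The hard part should be the forward $M_J$-linearity calculation, which is a multi-step but routine pasting argument; every other ingredient either invokes a monad/module axiom or uses the fact that vertical and horizontal composites of modifications in $\PDer$ are again modifications, so the compatibility with the $\mathbf{Dia}$-structure comes for free.
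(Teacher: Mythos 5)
Your formulas are exactly the ones the paper uses in both directions: forward, $\xi_{(X,\lambda)}=M_J\lambda\circ(\sigma_J)_X\circ(\mathbb S_J)_X$; conversely, $\sigma=U_M\xi F_M$, i.e.\ the value of the section at free modules. Your converse is in fact more self-contained than the paper's, which simply cites~\cite[Proposition 6.3]{Virelizier} for the separability identities: your identification of $\mu\sigma=1_M$ with the section condition at $F_{M,J}Y$ (where $\e_{F_{M,J}Y}=\mu_{J,Y}$), of $\mu M\circ M\sigma=\sigma\mu$ with $M_J$-linearity of $\rho_{F_{M,J}Y}$, and of $M\mu\circ\sigma M=\sigma\mu$ with naturality of $\rho$ at the $M_J$-linear map $\mu_{J,Y}$ is correct, as is your observation that $U_M\rho F_M$ inherits the modification property by whiskering. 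The levelwise forward verifications (section identity and $M_J$-linearity) are likewise exactly what the paper delegates to Virelizier.

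The genuine gap is the sentence claiming that, because $\mathbb S$, $\sigma$ and the structure maps of $M$ are modifications, the assignment $(X,\lambda)\mapsto\rho_{(X,\lambda)}$ ``automatically'' defines a modification. This does not follow as stated: the middle factor of your composite is $M_J\lambda$, and $\lambda$ is object-dependent structure on $(X,\lambda)$, not the component of any modification, so $\rho$ is not a vertical or horizontal composite of modifications and the closure properties of $\PDer$ do not apply directly. Verifying the compatibility~\eqref{eq: modification} of $\xi$ with the coherence isomorphisms along a functor $u:J\to K$ is precisely the content of the paper's proof: one must compare $u^*\xi_K$ with $\xi_J\overline{u^*}$ by a diagram chase using~\eqref{eq: modification} for $\mathbb S$ and for $\sigma$, the description of $\gamma^{M^2}_u$ as a horizontal composite, and naturality of $\gamma^M_u$ --- note in particular that the action of $\overline{u^*}(X,\lambda)$ is $u^*\lambda\circ(\gamma^M_u)^{-1}_X$ rather than $u^*\lambda$, which is exactly where $\gamma^M_u$ enters and why the check is not vacuous. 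One can rescue an ``automatic'' argument by writing $U_M\xi=M(U_M\e)\circ\sigma U_M\circ\mathbb S U_M$ as a composite of genuine modifications and then using that $U_M$ is strict and levelwise faithful to reflect the modification condition back to $\xi$; but some such argument must be supplied, and as written your proof omits the only step of the lemma that is not already in the literature.
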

\begin{proof}
Let $\xi:1_{M\Mod_{\mathbb D}} \to F_MU_M$ be a section of $\e:F_MU_M \to 1_{M\Mod_{\mathbb D}}$. Define $\sigma:M \to M^2$ as the composite $M=U_MF_M \xrightarrow{U_M \xi F_M} U_MF_MU_MF_M=M^2$, which is necessarily a modification. Then by~\cite[Proposition 6.3]{Virelizier}, $M$ is separable.

Conversely, assume $M$ is separable and let $\sigma: M \to M^2$ a section of $\mu$ satisfying $M \mu \circ \sigma M= \sigma \circ \mu= \mu M \circ M \sigma$. For any category $J$ in $\mathbf{Dia}$, and any $(X, \lambda) \in M_J\Mod_{\mathbb D(J)}$, define
$$(\xi_J)_{(X, \lambda)}: X \xrightarrow{\mathbb S_{J,X}} M_JX \xrightarrow{(\sigma_J)_X} M_J^2X \xrightarrow{M_J \lambda} M_JX$$
where $\mathbb S$ is the unit of the monad $M$, which is also the unit of the Eilenberg-Moore adjunction. It is proven in~\cite[Proposition 6.3]{Virelizier} that this defines a natural transformation $\xi_J: 1_{M\Mod_{\mathbb D}(J)} \to F_{M,J}U_{M,J}$ that is a section of the counit $\e_J$. All that is left for us is to actually show this is a modification. So let $u:J \to K$ a functor in $\mathbf{Dia}$. We need to show that the diagram:
\begin{equation*}
\begin{tikzcd}
u^* \rar["u^* \xi_K"] \drar["\xi_J u^*"'] & u^*F_{M,K}U_{M,K} \dar["\gamma^M_u"] \\
& F_{M,J}U_{M,J}u^*
\end{tikzcd}
\end{equation*}
commutes. So, let $(X, \lambda) \in M_K\Mod_{\mathbb D(K)}$. Then $u^*((\xi_{K})_{(X, \lambda)})$ is the composite:
\begin{equation*}
u^*X \xrightarrow{u^* ((\mathbb S_K)_X)} u^*M_KX \xrightarrow{u^* ((\sigma_K)_X)} u^*M_K^2X \xrightarrow{u^* M_K \lambda} u^*M_KX
\end{equation*}
and $(\xi_J)_{u^*X}$ is the composite:
\begin{equation*}
u^*X \xrightarrow{(\mathbb S_J)_{u^*X}} M_Ju^*X \xrightarrow{(\sigma_J)_{u^*X}} M_J^2u^*X \xrightarrow{M_J (\gamma^M_u)_X^{-1}} M_J u^* M_KX \xrightarrow{M_J u^* \lambda} M_J u^*X
\end{equation*}

\noindent
Now it suffices to note that the diagram
\begin{equation*}
\begin{tikzcd}[row sep = huge]
u^*X \rar["u^* (\mathbb S_K)_X"] \dar["(\mathbb S_J)_{u^*X}"'] &[+25pt]
u^*M_KX \rar["u^* (\sigma_K)_X"] \dlar["(\gamma^M_u)_X"'] &[+25pt]
u^*M_K^2X \rar["u^* M_K \lambda"] \dlar["(\gamma^{M^2}_u)_X"'] \dar["(\gamma^M_u)_{M_KX}"] &[+25pt]
u^*M_KX \dar["(\gamma^M_u)_X"] \\
M_Ju^*X \rar["(\sigma_J)_{u^*X}"'] &
M_J^2 u^*X \rar["M_J (\gamma^M_u)_X^{-1}"'] &
M_Ju^*M_KX \rar["M_J u^* \lambda"'] &
M_Ju^*X
\end{tikzcd}
\end{equation*}
commutes (left triangle by~\eqref{eq: modification} for $\mathbb S$, the other triangle by definition of composition of pseudonatural transformations, the skew parallelogram because $\sigma$ is a modification, and the right square by naturality of~$\gamma^M_u$).
\end{proof}

\begin{defn} \label{defn: strong}
A derivator $\mathbb D$ is \textbf{strong} if it satisfies the following axiom:

\noindent
($Der5$) For any category $J$ in $\mathbf{Dia}$, the partial underlying diagram functor $\dia_{J,[1]}:\mathbb D([1] \times J) \to \mathbb D(J)^{[1]}$ is full and essentially surjective (here $[1]$ stands for the poset $\{ 0<1\}$ viewed as a category).
\end{defn}

\begin{defn} \label{defn: additive}
	A derivator $\mathbb D$ is \textbf{additive} if its underlying category $\mathbb D(e)$ is additive. Equivalently, by \cite[Proposition 5.2]{Groth}, all its values are additive categories, and $\mathbb D$-pullbacks and Kan extensions are additive functors. An additive derivator is \textbf{idempotent complete} if its values are idempotent complete categories.
\end{defn}

\begin{lem} \label{lem: idempotent} 
Let $\mathbb D$ be an additive idempotent-complete derivator. Let $f:X \to Y$ a morphism in $\mathbb D(e)$ and $F \in \mathbb D([1])$ a lift of $f$ \ie $\dia_{[1]}(F) \cong f$. Assume furthermore that $(a,b):f \to f$ is an idempotent in~$\mathbb D(e)^{[1]}$ (\ie $a^2=a,b^2=b$). Then there is an idempotent $e:F \to F$ in $\mathbb D([1])$ lifting $(a,b)$.
\end{lem}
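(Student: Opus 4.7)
The plan is to produce $e$ by showing that the splitting of $(a,b)$ as an idempotent in the arrow category $\mathbb{D}(e)^{[1]}$ lifts to a direct sum decomposition of $F$ in $\mathbb{D}([1])$, from which $e$ arises as the projection onto one of the summands.

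First, since $\mathbb{D}(e)$ is additive and idempotent-complete, so is its arrow category $\mathbb{D}(e)^{[1]}$. The idempotent $(a,b)\colon f\to f$ therefore splits in $\mathbb{D}(e)^{[1]}$: decomposing $X=X_{1}\oplus X_{2}$ and $Y=Y_{1}\oplus Y_{2}$ with $X_{1}=\operatorname{Im}(a)$ and $Y_{1}=\operatorname{Im}(b)$, the compatibility $bf=fa$ forces $f$ to be block-diagonal, so $f=f_{1}\oplus f_{2}$ with $f_{i}\colon X_{i}\to Y_{i}$, and under this decomposition $(a,b)$ is precisely the projection onto the $f_{1}$-summand.

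The heart of the argument is to lift this decomposition to $\mathbb{D}([1])$: exhibit $F_{1},F_{2}\in\mathbb{D}([1])$ with $\dia_{[1]}(F_{i})\cong f_{i}$, together with an isomorphism $F\cong F_{1}\oplus F_{2}$ that covers the splitting $f=f_{1}\oplus f_{2}$. Once this decomposition is available, the desired idempotent is built as the composite
\[
e\colon F\cong F_{1}\oplus F_{2}\xrightarrow{\pi_{1}}F_{1}\xrightarrow{\iota_{1}}F_{1}\oplus F_{2}\cong F,
\]
which is manifestly idempotent; its underlying morphism $\dia_{[1]}(e)$ is the projection onto $f_{1}$ in $f=f_{1}\oplus f_{2}$, which is exactly $(a,b)$.

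The main obstacle is the lifting step. Without strongness of $\mathbb{D}$, the functor $\dia_{[1]}\colon\mathbb{D}([1])\to\mathbb{D}(e)^{[1]}$ is generally not full, so we cannot simply lift the projection morphism from the arrow category. Instead, I would exploit that $\mathbb{D}([1])$ is itself additive and idempotent-complete (as a value of the additive idempotent-complete derivator $\mathbb{D}$) and combine this with the pointwise conservativity of the family $\{0^{*},1^{*}\}$ granted by \textit{Der2} to force the summands $F_{i}$ into existence inside $\mathbb{D}([1])$, thereby splitting $F$ and yielding the required idempotent.
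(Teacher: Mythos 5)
Your opening moves coincide with the paper's: split the idempotents $a$ and $b$ to write $X\cong A\oplus C$, $Y\cong B\oplus D$, observe that $bf=fa$ forces $f$ to be block-diagonal, $f\cong w\oplus z$, and aim to realize this decomposition coherently in $\mathbb D([1])$ so that the idempotent is the transport of $\left(\begin{smallmatrix}1&0\\0&0\end{smallmatrix}\right)$. You also correctly identify the crux: without fullness of $\dia_{[1]}$ one cannot simply lift the projection from the arrow category. The problem is that your proposed resolution of that crux does not work. Idempotent-completeness of $\mathbb D([1])$ only lets you split an idempotent endomorphism of $F$ \emph{that you already have in} $\mathbb D([1])$ --- and producing such an endomorphism lifting $(a,b)$ is exactly what the lemma asks for, so this is circular. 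Likewise $Der2$ is a conservativity statement: it can certify that a morphism of $\mathbb D([1])$ you have already constructed is an isomorphism, but it cannot manufacture any morphism, let alone the summands $F_1,F_2$ or a comparison map $F\to F_1\oplus F_2$. Nothing in additivity plus idempotent-completeness plus $Der2$ ``forces the summands into existence.''

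The paper resolves this step by using strongness ($Der5$), even though that hypothesis is not listed in the statement of the lemma (it is available in the lemma's only application, inside Proposition~\ref{prop: strongness}, where $\mathbb D$ is assumed strong). Concretely: essential surjectivity of $\dia_{[1]}$ gives coherent lifts $W,Z$ of $w,z$, additivity of pullbacks shows $W\oplus Z$ lifts $f$, fullness of $\dia_{[1]}$ lifts the identity of $f$ to a morphism $F\to W\oplus Z$, which is a pointwise isomorphism and hence an isomorphism by $Der2$; one then conjugates $\left(\begin{smallmatrix}1&0\\0&0\end{smallmatrix}\right)$ on $W\oplus Z$ by this isomorphism. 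So the missing ingredient in your argument is precisely an appeal to $Der5$; as written, your proof has a genuine gap at the only step that requires a nontrivial input.
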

\begin{proof}
As $a$ is an idempotent of $X$, there is a splitting $X \cong A \oplus C$ with $a \cong \left ( \begin{smallmatrix} 1 & 0 \\ 0 & 0 \end{smallmatrix} \right ):A \oplus C \to A \oplus C$. Similarly, there is a splitting $Y \cong B \oplus D$ with $b \cong \left ( \begin{smallmatrix} 1 & 0 \\ 0 & 0 \end{smallmatrix} \right ): B \oplus D \to B \oplus D$. Let $f \cong \left ( \begin{smallmatrix} w & x \\ y & z \end{smallmatrix} \right ): A \oplus C \to B \oplus D$. We then have a commutative diagram
\begin{equation*} 
\xymatrix{
X \ar[dr]^{\cong} \ar[rrr]^f \ar[ddd]_a &&& Y \ar[ddd]^b \ar[dl]_{\cong} \\
& A \oplus C \ar[r]^{\left ( \begin{smallmatrix} w & x \\ y & z \end{smallmatrix} \right )} \ar[d]_{\left ( \begin{smallmatrix} 1 & 0 \\ 0 & 0 \end{smallmatrix} \right )} & B \oplus D \ar[d]^{\left ( \begin{smallmatrix} 1 & 0 \\ 0 & 0 \end{smallmatrix} \right )} & \\
& A \oplus C \ar[r]_{\left ( \begin{smallmatrix} w & x \\ y & z \end{smallmatrix} \right )}  & B \oplus D & \\
X \ar[ur]^{\cong} \ar[rrr]_f &&& Y \ar[ul]_{\cong}
}
\end{equation*}
from which we deduce that $x=0$ and $y=0$. It follows that $f \cong w \oplus z: A \oplus C \to B \oplus D$.  Pick a lift~$W \in \mathbb D([1])$ of $w$ (\ie $\dia_{[1]}(W) \cong w$) and a lift $Z \in \mathbb D([1])$ of $z$. Since pullback is an additive functor, we can see that $W \oplus Z$ lifts $f$. By strongness, the identity on $f$ lifts to a morphism $F \to W \oplus Z$ in~$\mathbb D([1])$. This is a pointwise isomorphism by construction, hence by $Der2$ it is an isomorphism. Then we can define an idempotent of $F$ in~$\mathbb D([1])$ via the map $F \cong W \oplus Z \xrightarrow{\left ( \begin{smallmatrix} 1 & 0 \\ 0 & 0 \end{smallmatrix} \right )} W \oplus Z \cong F$ which clearly lifts the idempotent~$(a,b)$ of $f$.
\end{proof}
\begin{rem} \label{rem:big implies idempotent complete}
	When $\mathbb D$ is a big derivator(see Definition~\ref{defn: big}) that is also stable and strong, then $\mathbb D(e)$ admits all small coproducts (see~\cite[Proposition 1.7]{Groth}) and is triangulated (see \cite[Proposition 4.16]{Groth}). Hence, by a well-known fact for triangulated categories, $\mathbb D(e)$ is idempotent-complete (see for instance~\cite[Proposition 1.6.8]{Neeman}).
\end{rem}
\begin{prop} \label{prop: strongness}
Let $F: \mathbb D \leftrightarrows \mathbb E : G$ an adjunction of derivators with $\mathbb D$ stable, strong and idempotent complete, $\mathbb E$ additive, and idempotent-complete. Furthermore, assume that the counit $\e:FG \to 1_{\mathbb E}$ admits a section $\xi: 1_{\mathbb E} \to FG$. Then $\mathbb E$ is strong.
\end{prop}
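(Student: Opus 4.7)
My plan is to verify the two halves of $(\text{Der}5)$ separately for every $J \in \mathbf{Dia}$: that $\dia_{J,[1]} \colon \mathbb E([1]\times J) \to \mathbb E(J)^{[1]}$ is full and that it is essentially surjective. In both cases the strategy is the same: transport the data along $G$ to $\mathbb D$, use strongness of $\mathbb D$ there, then come back to $\mathbb E$ via $F$ and collapse the resulting $FG$-decoration using the triangle identity $\e \xi = 1_\mathbb E$ and the naturality of the modifications $\xi$ and $\e$.

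For fullness, given $\tilde X, \tilde Y \in \mathbb E([1]\times J)$ with $\dia(\tilde X) = x$, $\dia(\tilde Y) = y$ and $g\colon x \to y$ in $\mathbb E(J)^{[1]}$, fullness in $\mathbb D$ lifts $Gg$ to some $\widetilde{Gg}\colon G\tilde X \to G\tilde Y$ in $\mathbb D([1]\times J)$, and I define $\tilde g := \e_{\tilde Y} \circ F\widetilde{Gg} \circ \xi_{\tilde X} \colon \tilde X \to \tilde Y$. Naturality of $\xi$ at $g$ gives $FGg \cdot \xi_x = \xi_y \cdot g$, so $\dia(\tilde g) = \e_y \xi_y g = g$. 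For essential surjectivity, given $f\colon X \to Y$ in $\mathbb E(J)$, I first lift $Gf$ to $\widetilde{Gf} \in \mathbb D([1]\times J)$, producing a lift $F\widetilde{Gf}$ of $FGf$ in $\mathbb E$. The arrow $f$ is a direct summand of $FGf$ in $\mathbb E(J)^{[1]}$ via the idempotent $p := (\xi_X\e_X, \xi_Y\e_Y)$, so producing a lift of $f$ reduces to lifting $p$ to an idempotent $\tilde p$ on $F\widetilde{Gf}$ and splitting in the idempotent-complete category $\mathbb E([1]\times J)$, since splitting commutes with $\dia$.

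To construct $\tilde p$, I apply Lemma~\ref{lem: idempotent} in the shifted derivator $\mathbb D^J$ (which inherits strongness, additivity and idempotent-completeness from $\mathbb D$) to lift the idempotent $Gp = (G\xi_X G\e_X, G\xi_Y G\e_Y)$ on $GFGf$ to an idempotent $\tilde q \colon GF\widetilde{Gf} \to GF\widetilde{Gf}$ in $\mathbb D([1]\times J)$. Splitting $\tilde q$ in $\mathbb D([1]\times J)$ and identifying its image with $\widetilde{Gf}$ using fullness of $\mathbb D$ and $(\text{Der}2)$ yields maps $\tilde s, \tilde r$ with $\tilde r \tilde s = 1$, $\tilde s \tilde r = \tilde q$, $\dia(\tilde s) = G\xi_f$, $\dia(\tilde r) = G\e_f$. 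Then the candidate idempotent in $\mathbb E([1]\times J)$ is
\[
\tilde p := F\tilde r \circ \bigl(\xi_{F\widetilde{Gf}} \circ \e_{F\widetilde{Gf}}\bigr) \circ F\tilde s,
\]
which satisfies $\dia(\tilde p) = p$ by the two naturality identities $FG\e_X \circ \xi_{FGX} = \xi_X \e_X$ and $\e_{FGX} \circ FG\xi_X = \xi_X \e_X$ followed by $\e_X \xi_X = 1_X$. After splitting $\tilde p$ by idempotent-completeness, one obtains an object $\tilde f \in \mathbb E([1]\times J)$ with $\dia(\tilde f) \cong f$.

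The hard step will be verifying that $\tilde p$ is genuinely an idempotent in $\mathbb E([1]\times J)$: the identity $\tilde p^2 = \tilde p$ reduces, after using $\tilde r \tilde s = 1$, to the compatibility of $F\tilde q = F\tilde s F\tilde r$ with the idempotent $\xi_{F\widetilde{Gf}} \e_{F\widetilde{Gf}}$, a relation which holds at the underlying-diagram level by modification-naturality of $\xi \e$ applied to $p$ but whose coherent refinement requires careful use of the specific form of $\tilde q$ produced by Lemma~\ref{lem: idempotent} together with the triangle identity. Once this idempotency is checked, splitting $\tilde p$ produces the desired lift and, combined with the fullness established in Part~1, gives $(\text{Der}5)$ for $\mathbb E$.
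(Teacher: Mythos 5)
Your fullness argument is exactly the paper's, and your essential-surjectivity skeleton --- lift $Gf$ to $\mathbb D$, push back along $F$, realize $f$ as the summand of $FGf$ cut out by the idempotent $p=(\xi_X\e_X,\xi_Y\e_Y)$, lift that idempotent coherently in $\mathbb D$ via Lemma~\ref{lem: idempotent}, transport it to $\mathbb E$ by conjugating with $\xi$ and $\e$, and split --- is also the paper's. But the step you defer as ``the hard step'' is a genuine gap, and the resolution you gesture at cannot work. You hope to get $\tilde p^2=\tilde p$ from naturality of $\xi\e$ ``at the underlying-diagram level'' plus the specific form of $\tilde q$; the obstruction is that $\dia$ is not faithful, so no identity of coherent morphisms can be deduced from the corresponding identity of underlying diagrams. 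Concretely, $\tilde p^2=F\tilde r\circ(\xi\e)\circ F\tilde q\circ(\xi\e)\circ F\tilde s$, and idempotency would follow if $F\tilde q$ commuted with $\xi_{F\widetilde{Gf}}\e_{F\widetilde{Gf}}$; but naturality of the endotransformation $\xi\circ\e$ of $FG$ only makes it commute with morphisms of the form $FG(h)$, which $F\tilde q$ is not. Only $\dia(\tilde p^2)=\dia(\tilde p)$ is formal; the coherent identity is exactly what fails in general.

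The paper's proof contains precisely the ingredient you are missing: it does \emph{not} try to make the transported endomorphism an idempotent on the nose. Writing $\overline r$ for the transported lift of $p$, it observes that $a=\overline r^2-\overline r$ has vanishing underlying diagram, invokes the cited result of Grothendieck to conclude $a^2=0$ for such endomorphisms, and then checks that $b=3\overline r^3-2\overline r^2$ is an honest idempotent lifting the same $(r_X,r_Y)$; it is $b$ that gets split. Without this square-zero-plus-polynomial correction (or a substitute for it), your argument does not close. A secondary, fixable point: after splitting $\tilde q$ and identifying its image with $\widetilde{Gf}$ by lifting the identity, you only obtain \emph{some} splitting of $Gp$ through $Gf$; arranging $\dia(\tilde r)=G\e_f$ and $\dia(\tilde s)=G\xi_f$ exactly requires a further adjustment by a coherently lifted automorphism of $\widetilde{Gf}$.
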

\begin{proof}
The assumptions are preserved under shifting, so enough to show that $\dia^{\mathbb E}_{[1]}: \mathbb E([1]) \to \mathbb E(e)^{[1]}$ is full and essentially surjective. 

For essential surjectivity, let $f:X \to Y$ a morphism in $\mathbb E(e)$. The morphism $g=G_ef:G_eX \to G_eY$ in $\mathbb D(e)$ admits a lift $\overline g \in \mathbb D([1])$. Set $\overline f=F_{[1]} \overline g$. Since $F$ is a morphism of derivators, it commutes with pullbacks (up to isomorphism). Hence by construction of the underlying diagram functor, it follows that~$\dia^{\mathbb E}_{[1]} F_{[1]} \cong F_e \dia^{\mathbb D}_{[1]}$. Hence, $\dia^{\mathbb E}_{[1]}\overline f \cong F_eg=F_eG_ef$. 

Note that again since $G$ is a morphism of derivators, we have $\dia^{\mathbb D}_{[1]}G_{[1]} \overline f \cong G_e \dia^{\mathbb E}_{[1]} \overline f \cong G_eF_eg$. Set~$r=\xi_e \e_e$. Note $(r_X,r_Y)$ is an idempotent of $F_eg$. Then $(G_er_X,G_er_Y)$ is an idempotent of $G_eF_eg$. By Lemma~\ref{lem: idempotent}, it follows that there is an idempotent $\tilde r$ of $G_{[1]} \overline f$ with $\dia^{\mathbb D}_{[1]}\tilde r \cong (G_er_X,G_er_Y)$.

Set $\overline r=\e_{\overline f} \circ F_{[1]}\tilde r \circ \xi_{\overline f}$, which is an endomorphism of $\overline f$, lifting $(r_X,r_Y)$. Since the last is an idempotent of $F_eG_ef$, it follows $a=\overline r^2- \overline r$ lifts the $0$ endomorphism of $F_eG_ef$. Hence by~\cite[p. 206, Theorem 3.8]{Grothdis} it follows that $a^2=0$. Finally, $b=3 \overline r^3-2 \overline r^2$ is an idempotent of $\overline f$ by direct computation, which lifts the idempotent $(r_X,r_Y)$ of $F_eG_ef$. 

So far, we showed that the idempotent $(r_X,r_Y)$ of $F_eG_ef$ lifts to an idempotent $\overline r$ of the lift $\overline f$ of $F_eG_ef$. Since by assumption $\mathbb E$ is idempotent complete, there is a splitting $\overline f \cong K \oplus H$ such that $\overline r$ corresponds to~$\left ( \begin{smallmatrix} 0 & 0 \\ 0 & 1 \end{smallmatrix} \right )$. Now using additivity of pullbacks in any additive derivator, it follows that the underlying diagram of $\overline r$:
\begin{equation*}
\begin{tikzcd}
F_eG_eX \rar["F_eG_ef"] \dar["r_X"'] &
F_eG_eY \dar["r_Y"] \\
F_eG_eX \rar["F_eG_ef"'] &
F_eG_eY
\end{tikzcd}
\end{equation*}
is isomorphic to a diagram of the form 
\begin{equation*}
\xymatrix{
\widetilde X \oplus \widetilde X' \ar[r]^{\left ( \begin{smallmatrix} a & 0 \\ 0& b \end{smallmatrix} \right )} \ar[d]_{\left ( \begin{smallmatrix} 0 & 0 \\ 0 & 1 \end{smallmatrix} \right )} &
\widetilde Y \oplus \widetilde Y' \ar[d]^{\left ( \begin{smallmatrix} 0 & 0 \\ 0 & 1 \end{smallmatrix} \right )} \\
\widetilde X \oplus \widetilde X' \ar[r]_{\left ( \begin{smallmatrix} a & 0 \\ 0& b \end{smallmatrix} \right )} &
\widetilde Y \oplus \widetilde Y'
}
\end{equation*}

It then follows easily from uniqueness of splitting of idempotents, that $f \cong b$ and hence $\dia^{\mathbb E}_{[1]} H \cong f$. This proves essential surjectivity.

For fullness, assume that we have a commutative diagram
$$\begin{tikzcd}
A \rar["x"] \dar["a"'] &
B \dar["b"] \\
C \rar["y"'] &
D
\end{tikzcd}$$
in $\mathbb D(e)$ and chosen lifts $X,Y$ of $x,y$ respectively in $\mathbb E([1])$. Then, applying strongness of $\mathbb D$ we deduce that there is a lift $\phi: G_{[1]}X \to G_{[1]}Y$ lifting $(G_ea,G_eb)$. Hence $F_{[1]} \phi$ lifts $(F_eG_ea, F_eG_eb)$.

Set~$\overline {\phi}=(\e_{[1]})_Y F_{[1]} \phi (\xi_{[1]})_X: X \to Y$. By the fact that $\xi, \e$ are modifications, and functoriality of underlying diagram, we deduce that the underlying diagram of $\overline {\phi}$ is isomorphic to
$$\begin{tikzcd}
A \rar["x"] \dar["(\xi_e)_A"'] & 
B \dar["(\xi_e)_B"] \\
F_eG_eA \rar["F_eG_ex"] \dar["F_eG_ea"'] &
F_eG_e B \dar["F_eG_eb"] \\
F_eG_eC \rar["F_eG_ey"] \dar["(\e_e)_C"'] &
F_eG_e D \dar["(\e_e)_D"] \\
C \rar["y"'] &
D
\end{tikzcd}$$
which by naturality of $\e_e$ is in turn isomorphic to
$$\begin{tikzcd}
A \rar["x"] \dar["(\xi_e)_A"'] & 
B \dar["(\xi_e)_B"] \\
F_eG_eA \rar["F_eG_ex"] \dar["(\e_e)_A"'] &
F_eG_e B \dar["(\e_e)_B"] \\
A \rar["x"] \dar["a"'] &
B \dar["b"] \\
C \rar["y"'] &
D
\end{tikzcd}$$
which is exactly the original diagram (since $\e_e \circ \xi_e=1_{1_{E(e)}}$).
\end{proof}
\begin{cor} \label{cor: triangulated}
Let $M: \mathbb D \to \mathbb D$ a cocontinuous separable monad on a stable, strong and idempotent-complete derivator $\mathbb D$. Then the prederivator $M\Mod_{\mathbb D}$ of Proposition~\ref{prop: prederivator} is a strong stable derivator. Moreover, the morphisms $F_M,U_M$ of Proposition~\ref{prop: prederivator} are exact.
\end{cor}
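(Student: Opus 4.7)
The strategy is to assemble results established earlier in the paper. Since $M$ is cocontinuous and $\mathbb D$ is in particular a right derivator, Corollary~\ref{cor: main} yields that $M\Mod_{\mathbb D}$ is a derivator, and Proposition~\ref{prop: stable} then upgrades this to a stable derivator.

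The core of the argument is strongness, for which I would invoke Proposition~\ref{prop: strongness} for the Eilenberg-Moore adjunction $F_M : \mathbb D \leftrightarrows M\Mod_{\mathbb D} : U_M$ of Proposition~\ref{prop: prederivator}, setting $\mathbb E := M\Mod_{\mathbb D}$. The hypotheses on $\mathbb D$ hold by assumption, and the required section of the counit $\e : F_M U_M \to 1_{M\Mod_{\mathbb D}}$ is precisely what Lemma~\ref{lem: section} produces from separability of $M$. What remains is to verify that $\mathbb E$ is additive and idempotent complete at every level. Additivity follows because stability makes every $\mathbb D(J)$ additive, while cocontinuity of $M$ forces each $M_J$ to preserve finite coproducts (and hence biproducts), so that $M_J$ is an additive endofunctor of an additive category and $M_J\Mod_{\mathbb D(J)}$ inherits additivity by the standard fact for Eilenberg-Moore categories. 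Idempotent completeness is the classical argument: any idempotent $e$ on a module $(X,\lambda)$ splits in $\mathbb D(J)$ as $X \cong Y \oplus Z$, and the restriction of $\lambda$ induces a canonical $M_J$-action on the summand $Y$, yielding a splitting inside $M_J\Mod_{\mathbb D(J)}$. With these two facts in hand, Proposition~\ref{prop: strongness} delivers strongness of $M\Mod_{\mathbb D}$.

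For the final claim, $U_M$ is cocontinuous by Corollary~\ref{cor: main}, while $F_M$ is a left adjoint in $\PDer$ by Proposition~\ref{prop: prederivator} and is therefore cocontinuous (left adjoints preserve left Kan extensions). Since both source and target are stable, cocontinuity entails preservation of initial objects and cocartesian squares, and by stability these coincide with terminal objects and cartesian squares respectively; thus both $F_M$ and $U_M$ are exact in the sense used in Remark~\ref{rem: non-strong stable derivators}. The main obstacle in this plan is the additivity and idempotent-completeness bookkeeping for $M\Mod_{\mathbb D}$, which is routine but must be arranged carefully in order to trigger Proposition~\ref{prop: strongness} cleanly --- everything else is a direct application of results already assembled.
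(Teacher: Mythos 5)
Your proposal is correct and follows essentially the same route as the paper: stability via Proposition~\ref{prop: stable}, strongness by feeding the Eilenberg--Moore adjunction (with the counit section from Lemma~\ref{lem: section}) into Proposition~\ref{prop: strongness}, after checking additivity and idempotent completeness of each $M_J\Mod_{\mathbb D(J)}$. The only cosmetic differences are that you spell out the additivity/idempotent-splitting bookkeeping that the paper asserts in one line, and for exactness you argue directly from ``cocartesian $=$ cartesian'' in stable derivators where the paper instead cites \cite[Corollary 4.17]{Groth}; both are fine.
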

\begin{proof}
By Proposition~\ref{prop: stable}, $M\Mod_{\mathbb D}$ is a stable derivator. The monad $M$ is realized by the adjunction~$F_M: \mathbb D \leftrightarrows M\Mod_{\mathbb D}: U_M$ of Proposition~\ref{prop: prederivator}. The separability assumption on $M$ guarantees the existence of a section of the counit by Lemma~\ref{lem: section}. We know that each $M_J\Mod_{\mathbb D(J)}$ is additive and idempotent-complete since each $\mathbb D(J)$ is and $M_J$ is additive. Thus the Eilenberg-Moore adjunction satisfies all the conditions in the previous proposition and, hence, $\mathbb D$ is strong. For the last part, note that $F_M$ is cocontinuous as with all left adjoint morphisms of derivators (see~\cite[Proposition 2.9]{Groth}). Hence, in particular, $F_M$ is right exact. Dually, $U_M$ is left exact. Since these are morphisms between stable, strong derivators, by~\cite[Corollary 4.17]{Groth} it follows they are exact.
\end{proof}

\begin{rem}
	If the derivator $\mathbb D$ is big (see Definition~\ref{defn: big}), then the assumption of idempotent completeness is obsolete (see also Remark~\ref{rem:big implies idempotent complete}).
\end{rem}

\begin{rem}
	Assume $\mathbb D$ is a stable strong derivator of domain $\mathbf{Cat}$, and $M$ is a separable exact monad on $\mathbb D$. Then the last Corollary together with~\cite[Theorem 7.1]{linearity} implies that, when restricted to finite posets, the derivator~$M\Mod_{\mathbb D}$ is stable and strong, and that $F_M,U_M$ (see Proposition~\ref{prop: prederivator}) are exact morphisms of derivators. This reproves~\cite[Corollary 4.3]{Balmer} in the case the triangulated category~$\C$ is the base of a stable strong derivator $\mathbb D$ of domain $\mathbf{Cat}$ and~$M$ extends to a separable exact monad on $\mathbb D$.
\end{rem}

\begin{exam}	
	Assume $\mathbb D$ be a stable strong monoidal derivator. Then its underlying category $\C=\mathbb D(e)$ is a tensor triangulated category (see~\cite{GPS}). Write $\boxtimes$ for the external tensor product associated to the monoidal structure on $\mathbb D$, and let $A$ be a monoid object in $\C$. It is an easy but tedious verification to see that $M=A \boxtimes -: \mathbb D \to \mathbb D$ is a monad on $\mathbb D$, with multiplication and unit those of $A$. This monad is cocontinuous because the tensor product is assumed to be cocontinuous in each variable separately. Thus, by Corollary~\ref{cor: main}, we get a derivator $M\Mod_{\mathbb D}$ whose underlying category is exactly $A\Mod_{\C}$. If~$A$ is separable then so is $M$ and $M\Mod_{\mathbb D}$ is again a stable strong derivator. In particular $A\Mod_{\C}$ is a triangulated category. For examples of separable monoids in tensor triangulated categories we refer to the introduction.
\end{exam}

\appendix
\section{Some basics on derivators} \label{section: basics}
In this section, we gather some basic definitions that are needed throughout this paper. There are many sources for this, but our notation is the same as in~\cite{Groth}, to which the reader is referred for more details.

Let us recall first, that given a functor $u:J \to K$ and any $k \in K$, there is a category $J_{/k}$ where:
\begin{itemize}
	\item objects are pairs $(j,a)$, where $j$ an object of $J$ and $a:u(j) \to k$ is a morphism in $K$.
	\item morphisms $(j,a) \to (j',a')$ are those morphisms $f:j \to j'$ in $J$ such that $a=a' \circ u(f)$
\end{itemize}
There is also a projection functor $J_{/k} \to J$ mapping $(j,a)$ to $j$. The category $J_{k/}$ is defined dually and also comes equipped with a canonical projection to $J$. If $j \in J$ then these constructions applied to the identity functor on $J$ gives us categories $J_{/j}$ and $J_{j/}$ respectively. Finally, the fiber of $u$ over an object $k \in K$ is the category $J_k$ with objects those objects $j \in J$ such that $u(j)=k$ and morphisms $j \to j'$ those morphisms of $J$ mapping to the identity on $k$.

In the following we will denote by $\mathbf{Cat}$ the $2$-category of small categories while $\mathbf{CAT}$ refers to the $2$-"category" of all categories.

\begin{defn} \label{defn: diagram category}
	A full $2$-subcategory $\mathbf{Dia}$ of $\mathbf{Cat}$ is a \textbf{diagram category} if:
	\begin{enumerate}
		\item $\mathbf{Dia}$ contains all finite posets.
		\item $\mathbf{Dia}$ is closed under pullbacks and finite coproducts.
		\item For any $J \in \mathbf{Dia}$ and any $j \in J$ the slice constructions $J_{/j}$ and $J_{j/}$ also belong to $\mathbf{Dia}$.
		\item If $J \in \mathbf{Dia}$ then also $J^{\text{op}} \in \mathbf{Dia}$.
		\item If $u:J \to K$ is a Grothendieck fibration (see~\cite[Chapter 8]{Borceux2}) such that all the fibers $J_k \in \mathbf{Dia}$ and $K \in \mathbf{Dia}$, then also $J \in \mathbf{Dia}$.
	\end{enumerate}
\end{defn}

\begin{exam}
	Examples of diagram categories include $\mathbf{Cat}$, the $2$-category $\mathbf{Pos}_{\text f}$ of finite posets and $\mathbf{Dir}_{\text f}$ of finite direct categories.
\end{exam}

Given a $2$-category $K$, we will denote by $K^{\text{op}}$ the $2$-category obtained from $K$ by reversing $1$-cells.

\begin{defn}
	Let $\mathbf{Dia}$ a diagram category. A prederivator of domain $\mathbf{Dia}$ is a (strict) 2-functor~$\mathbb D: \mathbf{Dia}^{\text{op}} \to \mathbf{CAT}$.
\end{defn}

For the rest of this appendix, $\mathbf{Dia}$ will refer to a fixed diagram category, and all prederivators are of domain $\mathbf{Dia}$ unless specified otherwise.

\begin{exam} \label{prederivator}
	\begin{enumerate}
		\item Given a category $\C$, the assignment $J \mapsto \C^J$ gives us a prederivator, called the \textbf{prederivator represented by }$\mathbf{\C}$.
		\item A $\textbf{localizer}$ is a pair $(\C, \W)$ consisting of a category $\C$ and a class of arrows $\W$ in $\C$ called weak equivalences. Given a localizer $(\C, \W)$ and a small category $J$, let us denote by $\W_J$ the class of morphisms in $\C^J$ that are componentwise in $\W$. Assuming that the localization $\C^J[\W_J^{-1}]$ exists for each small category $J$, then the assignment $J \mapsto \C^J[\W_J^{-1}]$ defines a prederivator. For instance, this is true as long as our localizer is associated to a model category (see example~\ref{ex: derivator}).
	\end{enumerate}
\end{exam}

\begin{rem} \label{rem: coherent}
	Let $\mathbb D$ be a prederivator. We call $\mathbb D(e)$ the \textbf{underlying category of }$\mathbf{\mathbb D}$. Given a functor $u:J \to K$ between small categories, we will write $u^*$ for the image of $u$ under $\mathbb D$. The same notation applies to a natural transformation $\a:u\to v$.
	
	Given any category $J \in \mathbf{Dia}$ and any object $j \in J$, let us denote by $j$ again the functor $e \to J$ taking the unique object of $e$ to $j\in J$. We get a corresponding \textbf{evaluation functor} $j^*: \mathbb D(J) \to \mathbb D(e)$. Putting all evaluation functors together, we obtain a functor $\dia_J: \mathbb D(J) \to \mathbb D(e)^J$ which we call the \textbf{underlying diagram functor}. We think of $\mathbb D(J)$ as \textbf{coherent }$\mathbf{J}$\textbf{-shaped diagrams in }$\mathbf{\mathbb D}$, and of $\mathbb D(e)^J$ as \textbf{incoherent }$\mathbf{J}$\textbf{-shaped diagrams in }$\mathbf{\mathbb D}$.
	
	Similarly, given two small categories $J,K$, we can define a \textbf{partial underlying diagram functor} $\dia_{J,K}: \mathbb D(K \times J) \to \mathbb D(J)^K$. We think of $\mathbb D(J)^K$ as $J \times K$-shaped diagrams in $\mathbb D$ that are coherent in the $J$-direction, and incoherent in the $K$-direction. See also~\cite[p. 323]{Groth}.
\end{rem}

\begin{defn}
	A prederivator $\mathbb D$ \textbf{admits (homotopy) right Kan extensions} if for any functor $u:J \to K$ in $\mathbf{Dia}$, the induced functor $u^*: \mathbb D(K) \to \mathbb D(J)$ has a right adjoint $u_*$. Dually, if for any such $u$ the functor $u^*$ has a left adjoint $u_!$, then we say $\mathbb D$ \textbf{admits (homotopy) left Kan extensions}.
\end{defn}

Now let us consider a square in $\mathbf{Dia}$:
\begin{equation} \label{eq: square}
\begin{tikzcd}
J \ar[r, "f"] \ar[d, "u"'] & J' \dar["u'"] \dlar[phantom, sloped, "\xLeftarrow{\a}"] \\
K \rar["g"'] & K'
\end{tikzcd}
\end{equation}
\ie we have a natural transformation $\a:u'f \Rightarrow gu$. Given any prederivator $\mathbb D$, applying it to this square gives a new one as follows:
\begin{equation*}
\begin{tikzcd}
\mathbb D(J)  & \mathbb D(J') \lar["f^*"'] \dlar[phantom, sloped, "\xLeftarrow{\a^*}"] \\
\mathbb D(K) \uar["u^*"] & \mathbb D(K') \lar["g^*"] \uar["u'^*"']
\end{tikzcd}
\end{equation*}

Assuming that $\mathbb D$ admits left Kan extensions, the functors $u^*,u'^*$ have left adjoints. The \textbf{Beck-Chevalley transform} (or \textbf{mate}) $\a_!:u_!f^* \to g^*u_!'$ of $\a^*$ is the natural transformation given by the pasting:
\begin{equation*}
\begin{tikzcd}
\mathbb D(K) &
\mathbb D(J) \lar["u_!"']  &
\mathbb D(J') \lar["f^*"']  \ar[ld, phantom, sloped, "\xLeftarrow{\a^*}"] & \\
&
\mathbb D(K) \ar[lu, bend left, ""'{name=A}, "1_{\mathbb D(K)}"]  \uar["u^*"']  &
\mathbb D(K') \uar["u'^*"] \lar["g^*"] &
\mathbb D(J') \lar["u'_!"] \ar[ul, bend right, "1_{\mathbb D(J')}"', ""{name=B}]
\ar[phantom, sloped, from=A, to=1-2, "\xLeftarrow{\e}"] \ar[phantom, sloped, from=2-3, to=B, "\xLeftarrow{\eta}"]
\end{tikzcd}
\end{equation*}
\ie it is given as the top arrow in the following defining commutative square:
\begin{equation*}
\begin{tikzcd}
u_!f^* \dar["u_!f^* \eta"'] \rar["\a_!"] &[+25pt] g^*u'_! \\
u_!f^*u'^*u'_! \rar["u_! \a^* u'_!"'] & u_!u^*g^*u'_! \uar["\e g^* u'_!"']
\end{tikzcd}
\end{equation*}

Dually, if $\mathbb D$ admits homotopy right Kan extensions, then $\a^*$ has a mate $\a_*: u'^*g_* \to f_*u^*$, given by a similar pasting as above. We refer the reader to~\cite[\S 1.2]{Groth} for more details and properties of these constructions. As a last preparation, we give the following definition:

\begin{defn}
	Let $\mathbb D$ be a prederivator that admits homotopy left (respectively right) Kan extensions. A square~\eqref{eq: square} in $\mathbf{Dia}$ is called $\mathbb D$\textbf{-exact} if the mate $\a_!$ (respectively $\a_*$) is an isomorphism.
\end{defn}

Note that by~\cite[Lemma 1.14]{Groth}, if $\mathbb D$ admits both left and right homotopy Kan extensions, then the two (seemingly different) definitions above agree.

\begin{defn} \label{defn: left derivator}
	A prederivator $\mathbb D$ is a \textbf{left derivator} if it satisfies the following axioms:
	
	\noindent
	($Der1$) For any finite family of categories $\{J_i\}$ in $\mathbf{Dia}$, the induced functor $\mathbb D(\coprod J_i) \to \prod \mathbb D(J_i)$ is an equivalence of categories.
	
	\noindent
	($Der2$) For any category $J \in \mathbf{Dia}$ the underlying diagram functor $\dia_J:\mathbb D(J) \to \mathbb D(e)^J$ is conservative. That is, a morphism $f:X \to Y$ in $\mathbb D(J)$ is an isomorphism if and only if $j^*f$ is an isomorphism in $\mathbb D(e)$ for all $j \in J$.
	
	\noindent
	($Der3L$) $\mathbb D$ admits right Kan extensions.
	
	\noindent
	($Der4L$) Given a functor $u:J \to K$ in $\mathbf{Dia}$ and any $k \in K$, the following square is $\mathbb D$-exact (meaning $\a_*:k^*u_* \to (p_{J_{k/}})_*pr^*$ is an isomorphism): 
	\begin{equation} \label{eq: pointwise formula right Kan extension}
	\begin{tikzcd}
	J_{k/} \ar[r, "pr"] \ar[d, "p_{J_{k/}}"'] & J \ar[d, "u"] \dlar[phantom, sloped, "\xRightarrow{\a}"] \\
	e \ar[r, "k"'] & K
	\end{tikzcd}
	\end{equation}
	where $pr:J_{k/} \to J$ is the canonical projection as in the beginning of this appendix, and the natural transformation $\a$ is the obvious one whose component at a pair $(j,a)$ is exactly $a$.
\end{defn}

\begin{defn}
	A prederivator $\mathbb D$ is a \textbf{right derivator} if it satisfies $Der1$ and $Der2$ above and in addition the following axioms:
	
	\noindent
	($Der3R$) $\mathbb D$ admits left Kan extensions.
	
	\noindent
	($Der4R$) Given any functor $u:J \to K$ in $\mathbf{Dia}$, and any $k \in K$ the following square is $\mathbb D$-exact (meaning $\a_!$ is an isomorphism):
	\begin{equation} \label{eq: pointwise formula left Kan extension}
	\begin{tikzcd}
	J_{/k} \ar[r, "pr"] \ar[d, "p_{J_{/k}}"'] & J \ar[d, "u"] \dlar[phantom, sloped, "\xLeftarrow{\a}"] \\
	e \ar[r, "k"'] & K
	\end{tikzcd}
	\end{equation}
	where $pr$ is the canonical projection and the natural transformation is again the obvious one whose component at $(j,a)$ is exactly $a$.
\end{defn}

\begin{defn}
	A \textbf{derivator} is a prederivator $\mathbb D$ that is both a left and a right derivator.
\end{defn}

\begin{rem}
	Some authors require an infinite version of axiom $Der1$. The following definition is taken from~\cite{Fritz}:
\end{rem}
\begin{defn} \label{defn: big}
	A derivator of domain $\mathbf{Dia}$ is \textbf{big} if $\mathbf{Dia}$ is closed under infinite coproducts and $Der1$ holds for arbitrary families (instead of just finite ones).
\end{defn}

\begin{rem}
	Let us recall that in ordinary category theory, a left Kan extension of a functor $X:J \to \C$ along a functor $u:J \to K$ is a pair $(u_!(X):K \to \C, \eta:X \Rightarrow u_!(X) \circ u)$ that is initial among such pairs: if $(L,\theta)$ another such pair, there is a unique natural transformation $\delta:u_!(X) \to L$ such that $\delta u \circ_v \eta=\theta$ (here $\circ_v$ refers to vertical composition). In particular, when $K=e$, this is nothing more than the colimit of $X$. Thus, ordinary left Kan extensions are generalizations of colimits. It is a known fact that	if the category $\C$ is cocomplete, then for all functors $u: J \to K$ and all diagrams $X: J \to \C$ the left Kan extension of $X$ along $u$ exists; in fact the assignment $X \mapsto u_!(X)$ defines a functor $u_!:\C^J \to \C^K$ which is left adjoint to $u^*:=- \circ u:\C^K \to \C^J$; furthermore for any object $k \in K$ we have a canonical isomorphism $u_!(X)_b \cong \text{colim}_{J/k} F \circ pr$ where $pr:J/k \to J$ the canonical functor. Note also that in a cocomplete category, all $I$-shaped colimits (for $I$ a small category) assemble to a functor $\text{colim}_I= \pi_{I,!}:\C^I \to \C$ where $\pi_I:I \to e$ is the unique functor.
	
	Given a prederivator $\mathbb D$, we care more about coherent diagrams instead of incoherent ones (see Remark~\ref{rem: coherent}). Therefore, existence of left Kan extensions can be expressed as existence of certain adjoints to pullback functors between \textit{coherent} diagrams, analogously to the classical case described above. Then $Der3R$ says exactly that the homotopy theory expressed by $\mathbb D$ has all homotopy left Kan extensions; in particular all homotopy colimits. On the other hand, $Der4R$ says that homotopy left Kan extensions are pointwise given as homotopy colimits over certain slice categories (the homotopy analogue of the pointwise formulas above).
\end{rem}

\begin{exam} \label{ex: derivator}
	\begin{enumerate}
		\item If $\C$ is a category, then the prederivator represented by $\C$ (see Example~\ref{prederivator}) is a derivator if and only if $\C$ is bicomplete.
		\item Given a combinatorial model category $M$ with $\W$ as weak equivalences, the prederivator associated to the localizer $(M,\W)$ (see Example~\ref{prederivator}) is actually a derivator (see for instance~\cite[Proposition 1.30]{Groth}). More generally this is true for any model category (see~\cite{Cisinski-Model}).
		\item Given an exact category $\mathcal E$, the assignment $J \mapsto D^b(\mathcal E^J)$ defines a derivator of domain $\mathbf{Dir_f}$ (see \cite{Keller-exact}). Here $D^b$ stands for the bounded derived category of an exact category.
	\end{enumerate}
\end{exam}

\section{2-categorical notions in derivators} \label{section: 2-stuff}
In~\cite[Chapter 2]{Groth} it is shown that there is a $2$-category of prederivators $\PDer$ with morphisms pseudonatural transformations, and $2$-morphisms modifications. For the reader's convenience we spell out what this means:

Given two prederivators $\mathbb D, \mathbb E$ of the same domain $\mathbf{Dia}$, a morphism $\mathbb D \to \mathbb E$ consists of the following data:
\begin{itemize}
	\item For any category $J \in \mathbf{Dia}$, a functor $F_J: \mathbb D(J) \to \mathbb E(J)$.
	\item For any functor $u:J \to K$ in $\mathbf{Dia}$, a natural isomorphism $\gamma^F_u:u^*F_K \xrightarrow{\cong} F_J u^*$ as indicated in the diagram:
	\begin{equation*}
	\begin{tikzcd}
	\mathbb E(J) & \mathbb E(K) \lar["u^*"'] \dlar[phantom, sloped, "\xLeftarrow{\gamma^F_u}"] \\
	\mathbb D(J) \uar["F_J"] & \mathbb D(K) \uar["F_K"'] \lar["u^*"]
	\end{tikzcd}
	\end{equation*}
\end{itemize}
subject to the following coherence conditions:
	\begin{enumerate}
		\item For any category $J \in \mathbf{Dia}$ we have $\gamma^F_{1_J}=1_{F_J}$.
		\item Given a pair of composable functors $J \xrightarrow{u} K \xrightarrow{v}L $ in $\mathbf{Dia}$, the following pastings are equal: 
		\begin{equation*}
		\begin{tikzcd}
		\mathbb E(J) & \mathbb E(K) \dlar[phantom, sloped, "\xLeftarrow{\gamma^F_u}"] \lar["u^*"'] & \mathbb E(L) \lar["v^*"'] \dlar[phantom, sloped, "\xLeftarrow{\gamma^F_v}"] & \mathbb E(J) \dlar[phantom,"="] & \mathbb E(L) \lar["(vu)^*"'] \dlar[phantom, sloped, "\xLeftarrow{\gamma^F_{vu}}"] \\
		\mathbb D(J) \uar["F_J"] & \mathbb D(K) \uar["F_K"'] \lar["u^*"]  & \mathbb D(L) \uar["F_L"'] \lar["v^*"] & \mathbb D(J) \uar["F_J"] & \mathbb D(L) \lar["(vu)^*"] \uar["F_L"']
		\end{tikzcd}
		\end{equation*}
		
		In other words we have a commutative diagram:
		\begin{equation} \label{eq: 1-coherence}
		\begin{tikzcd}
		u^*v^*F_L \drar["\gamma^F_{vu}"'] \rar["u^* \gamma^F_v"] & u^*F_Kv^* \dar["\gamma^F_u v^*"] \\
		&F_Ju^*v^*
		\end{tikzcd}
		\end{equation}
		\item Given a natural transformation
		$\begin{tikzcd} J \rar[bend left=40, "u", ""'{name=U}] \rar[bend right=40, "v"', ""{name=V} ] & K \arrow[Rightarrow, from=U, to=V, "\alpha"]\end{tikzcd}$ 
		in $\mathbf{Dia}$, the following pastings are equal:
		\begin{equation*}
		\begin{tikzcd}[row sep=large] 
		\mathbb E(J) & \mathbb E(K) \lar["u^*"']  \dlar[phantom, sloped, "\xLeftarrow{\gamma^F_{u}}"] & \mathbb E(J) \dlar[phantom,"="] & \mathbb E(K) \lar[bend left=20,"v^*"'{name=V}] \lar[bend right=50, ""{name=U},"u^*"'] \dlar[phantom, sloped, "\xLeftarrow{\gamma^F_{v}}"] \arrow[Rightarrow, from=U, to=V, "\alpha^*"] \\
		\mathbb D(J) \uar["F_J"] & \mathbb D(K) \lar[bend right=20,"u^*"{name=U}] \lar[bend left=50,"v^*",""'{name=V}] \uar["F_K"'] \arrow[Rightarrow, from=U, to=V,"\alpha^*"] & \mathbb D(J) \uar["F_J"] & \mathbb D(K)  \uar["F_K"'] \lar["v^*"''] 
		\end{tikzcd}
		\end{equation*}
In other words we have a commutative diagram:
		\begin{equation} \label{eq: 2-coherence}
		\begin{tikzcd}
		u^*F_K \rar["\alpha^*F_K"] \dar["\gamma^F_u"']& v^*M_K \dar["\gamma^F_v"] \\
		F_Ju^* \rar["F_J\alpha^*"'] & F_Jv^*
		\end{tikzcd}
		\end{equation}
	\end{enumerate}
	Such a morphism is called $\mathbf{strict}$ if all its coherence isomorphisms are actually identities.
	
	Given $2$ morphisms $F,G$ from a prederivator $\mathbb D$ to a prederivator $\mathbb E$ (both of the same domain $\mathbf{Dia}$), a $2$-cell (or modification) $\rho$ from $F$ to $G$ consists of a natural transformation $\rho_J:F_J \Rightarrow G_J$ for each category $J \in \mathbf{Dia}$ compatible with the coherence isomorphisms of $F,G$. This means more explicitly that for any functor $u:J \to K$ in $\mathbf{Dia}$, the following pastings are equal:
			\begin{equation*}
			\begin{tikzcd}[row sep=large] 
			\mathbb E(J) & \mathbb E(K) \lar["u^*"']  \dlar[phantom, sloped, "\xLeftarrow{\gamma^F_{u}}"] & \mathbb E(J) \dlar[phantom,"="] & \mathbb E(K) \lar["v^*"'] \dlar[phantom, sloped, "\xLeftarrow{\gamma^G_{u}}"] \\
			\mathbb D(J) \uar["F_J"',""{name=V}] \uar[bend left=70, "G_J",""{name=U}] \arrow[Leftarrow, from=U, to=V,"\rho_J"] & \mathbb D(K) \lar["u^*"] \uar["F_K"']  & \mathbb D(J) \uar["G_J"] & \mathbb D(K)  \uar["G_K",""'{name=U}] \uar[bend right=70, "F_K"',""{name=V}] \lar["u^*"''] \arrow[Leftarrow, from=U, to=V,"\rho_K"]
			\end{tikzcd}
			\end{equation*}
	\begin{equation} \label{eq: modification}
	\begin{tikzcd}
	u^*F_K \rar["u^* \rho_K"] \dar["\gamma^{F}_{u}"'] & u^*G_K \dar["\gamma^G_{u}"''] \\
	F_Ju^* \rar["\rho_J u^*"'] & G_Ju^*
	\end{tikzcd}
	\end{equation}

\bibliographystyle{alpha}
\bibliography{Monads-Final-Version}

\end{document}